\newtheorem{thm}{Theorem}[section]
\newtheorem{lemma}[thm]{Lemma}
\newtheorem{prop}[thm]{Proposition}
\theoremstyle{definition}
\newtheorem{defn}[thm]{Definition}
\newtheorem{remark}[thm]{Remark}
\newcommand{\R}{\mathbb R}
\newcommand{\N}{\mathbb N}
\newcommand{\HL}{H_{0,L}^1(\mathcal C)}
\newcommand{\ve}{\varepsilon}
\title{Existence and multiplicity results for the fractional Laplacian in bounded domains}
\author{
{\sc Dimitri Mugnai} and 
{\sc Dayana Pagliardini}}
\date{}
\begin{document}

\maketitle
\begin{abstract}
In this paper, first we study existence results for a linearly perturbed elliptic problem driven by the fractional Laplacian. Then, we show a multiplicity result when the perturbation parameter is close to the eigenvalues. This latter result is obtained by exploiting the topological structure of the sublevels of the associated functional, which permits to apply a critical point theorem of mixed nature due to Marino and Saccon.
\end{abstract}

Keywords: fractional Laplacian, eigenvalues, $(\nabla)-$condition,  $\nabla-$theorem. 

2000AMS Subject Classification: 35R11, 35J65, 35J20, 35J61.

\section{Introduction}

In this paper we consider the problem
\begin{equation}\label{P}
\begin{cases}
(-\Delta)^{1/2}u=\lambda u+g(x,u) & \mbox{in}\; \Omega\\
u=0 & \mbox{on}\; \partial\Omega,
\end{cases}\end{equation}
where $\Omega$ is a bounded domain of $\mathbb R^N$,  $N\geq 2$, $\lambda \in \mathbb R$ and $g:\Omega \times \mathbb R\rightarrow \mathbb R$ is a given function; more precise details will be given below.
Of course, problem \eqref{P} is a possible fractional counterpart of the problem
\begin{equation}\label{PC}
\begin{cases}
-\Delta u=\lambda u+g(x,u)  & \mbox{in}\; \Omega\\
 u=0  & \mbox{on}\; \partial \Omega,
\end{cases}
\end{equation}
which has been the object of extensive study in the last four decades, essentially with the aid of variational methods, when $N\geq 3$.

We will not go into details about existence and multiplicity results for \eqref{PC} according to different assumptions on $g$, since the bibliography would be huge, but we focus on multiplicity results with very general assumptions on $g$. The first result for $g(x,t)\sim |t|^{p-2}t$ was established by Ambrosetti and Rabinowitz in \cite{ar}, where the authors assumed that
\begin{itemize}
\item  $g:\overline{\Omega} \times \mathbb R \rightarrow \mathbb R$ is a continuous function; 
\item  there exist $a_1, a_2>0$ and $p\in \left(2,2N/(N-2)\right)$ such that for all $(x,t)\in \overline \Omega\times \mathbb R$,
\begin{equation}\label{crescg}
|g(x,t)|\le a_1+a_2|t|^{p-1};
\end{equation}
\item $g(x,t)=o(|t|)$ for $t \rightarrow 0$ uniformly in $\overline \Omega$;
\item there exists $R\geq0$ and $\mu>2$ such that for all $|t|>R$ and all $x\in \overline\Omega$ 
\begin{equation}\label{pgg}
0<\mu G(x,t)\le g(x,t)t,
\end{equation}
where $G(x,t)=\int_{0}^{t}{g(x,\sigma)d\sigma}.$
\end{itemize}
Let us note that, as a consequence of \eqref{pgg}, we get the existence of $c_1,c_2>0$ such that for all $(x,t)\in \overline \Omega\times \mathbb R$
\begin{equation}\label{Gmu}
G(x,t)\ge c_1|t|^\mu-c_2.
\end{equation}

Under this assumptions, in \cite{ar} it is proved that problem \eqref{PC} has two nontrivial solutions when $\lambda=0$, though the same proof holds if $\lambda<\lambda_1$, where $\lambda_1$ is the first eigenvalue of $-\Delta$ in $\Omega$ subject to homogeneous Dirichlet boundary conditions.

Almost twenty years later, in \cite{W}, assuming $g:\R\to\R$ is of class $C^1$, Wang proved that problem \eqref{PC} has three nontrivial solutions under the same related assumptions. Since then, thousands of papers have estabilished other multiplicity results weakening the assumptions on the superlinear and subcritical $g$ (see the recent paper by Mugnai and Papageorgiou \cite{MP} for more general operators). However, not much was done for the case $\lambda>\lambda_1$. The first result in proving  Wang's result for $\lambda>\lambda_1$ and close to an eigenvalue can be found in Mugnai~\cite{dm}, whose result is complemented by Rabinowitz--Su--Wang in \cite{RSW}. However, while in \cite{RSW} $g:\overline{\Omega}\times\R\to \R$ is assumed to be of class $C^1$, in \cite{dm}, for a superlinear and supercritical $g$, it is assumed that
\begin{itemize}
\item  $g:{\Omega} \times \mathbb R \rightarrow \mathbb R$ is a Carath\'eodory function; 
\item there exist $a_1, a_2>0$ and $p\in (2,2N/(N-2))$ such that \eqref{crescg} holds for all $t\in \mathbb R$ and for a.e. $x \in \Omega$;
\item $g(x,t)=o(|t|)$ for $t \rightarrow 0$ uniformly in $\Omega$;
\item the condition \eqref{pgg} holds for all $t \neq 0$ and for a.e. $x$ in $\Omega$ with $\mu=p$.
\end{itemize}
Let us remark that with these weak assumptions, inequality \eqref{pgg} does {\em not} imply \eqref{Gmu}, and for this one has to {\em assume} that
\begin{itemize}
\item there exists $c_1>0$ such that for all $t\in \mathbb R$ and for a.e. $x$ in $\Omega$, we have  $G(x,t)\ge c_1|t|^p$, see Mugnai \cite{adm}.
\end{itemize}

Going back to problem \eqref{P}, some remarks are needed. The operator $(-\Delta)^{1/2}$ which we consider is the spectral square root of the Laplacian, which should not be confused with the integro--differential operator defined, up to a constant, as
\[
-(-\Delta)^s u(x)=
\int_{\R^n}\frac{u(x+y)+u(x-y)-2u(x)}{|y|^{n+2s}}\,dy,
\,\,\,\,\, x\in \R^n.
\]
Indeed, in this case the homogeneous Dirichlet ``boundary conditions'' should be interpreted as $u\equiv 0$ in $\R^N\setminus\Omega$ (see \cite{f}, \cite{s0}, \cite{sv} and \cite{s}). In fact, in \cite{s0} the authors show that these two operators, though often denoted in the same way, are really different, with eigenvalues and eigenfunctions behaving quite differently.

As already said, we will consider the spectral square root of the Laplacian, defined according to the following procedure (see Cabr\'e and Tan \cite{ct} and Caffarelli and Silvestre \cite{cs}). Let $H^{1/2}(\Omega)$ denote the Sobolev space of order $1/2$, defined as
\[
H^{1/2}(\Omega):=\left\{ u\in L^2(\Omega)\,:\, \int_\Omega\int_\Omega \frac{|u(x)-u(y)|^2}{|x-y|^{N+1}}dxdy<\infty\right\}
\]
with norm
\[
\|u\|^2_{H^{1/2}(\Omega)}:=\int_\Omega u^2dx+\int_\Omega\int_\Omega \frac{|u(x)-u(y)|^2}{|x-y|^{N+1}}dxdy.
\]
Introducing the cylinder ${\mathcal C}:=\Omega\times (0,\infty)$ with lateral boundary $\partial_L {\cal C}=\partial \Omega \times [0,\infty)$, set

\[
H_{0,L}^1(\mathcal C):=\left\{v\in H^1({\mathcal C})\,:\,v=0 \mbox{ a.e. on }\partial_L{\cal C}\right\},
\]
and denote by $tr_\Omega$ the trace operator on $\Omega\times \{0\}$ for functions in $\HL$,
\[
tr_\Omega v(x,y):=v(x,0)\quad \mbox{ for all $(x,y)\in \mathcal C$}.
\]
Then, from standard results, we know that
\[
{\cal V}_0(\Omega):=\left\{u=tr_\Omega v\,:\,v\in \HL\right\}\subset H^{1/2}(\Omega),
\]
but a characterization of ${\cal V}_0$ is available from the following proposition.
\begin{prop}[\cite{ct}, Proposition 2.1]
We have
\[
\begin{aligned}
{\cal V}_0(\Omega)&=\left\{u\in H^{1/2}(\Omega)\,:\, \int_\Omega \frac{u^2}{{\rm d(x)}}dx<\infty\right\} \\
&=\left\{u\in L^2(\Omega)\,:\,u=\sum_{k=1}^\infty \alpha_k \varphi_k\ { \rm satisfies }\ \sum_{k=1}^\infty \alpha_k^2\lambda_k<\infty\right\}.
\end{aligned}
\]
Here $(\lambda_k^2,\varphi_k)_k$ is the Dirichlet spectral decomposition of $-\Delta$ in $\Omega$, $(\varphi_k)_k$ being an orthonormal basis in $L^2(\Omega)$, and ${\rm d}(x):={\rm dist}(x,\partial \Omega)$.
\end{prop}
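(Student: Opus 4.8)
The natural approach is to realise $\mathcal V_0(\Omega)$ through the harmonic extension to the cylinder $\cil$ and then to read both characterisations off the spectral decomposition $(\lambda_k^2,\varphi_k)_k$ of $-\Delta$. \emph{Step 1 (harmonic extension and energy identity).} Given $u=\sum_k\alpha_k\varphi_k\in L^2(\Omega)$, set
\[
v(x,y):=\sum_{k=1}^\infty\alpha_k\,\varphi_k(x)\,e^{-\lambda_k y},\qquad (x,y)\in\cil .
\]
Because $-\Delta_x\varphi_k=\lambda_k^2\varphi_k$ cancels $-\partial_{yy}e^{-\lambda_k y}=-\lambda_k^2 e^{-\lambda_k y}$, each summand, hence $v$, is harmonic in $\cil$ and vanishes on $\partial_L\cil$. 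Using $\int_\Omega\varphi_j\varphi_k=\delta_{jk}$ and $\int_\Omega\nabla\varphi_j\cdot\nabla\varphi_k=\lambda_k^2\delta_{jk}$, a term-by-term computation gives
\[
\int_{\cil}|\nabla v|^2\,dx\,dy=\sum_k\alpha_k^2\int_0^\infty(\lambda_k^2+\lambda_k^2)e^{-2\lambda_k y}\,dy=\sum_k\alpha_k^2\lambda_k,\qquad \int_{\cil}v^2=\sum_k\frac{\alpha_k^2}{2\lambda_k}\le\frac{\|u\|_{L^2}^2}{2\lambda_1}.
\]
Thus $v\in\HL$ and $tr_\Omega v=u$ precisely when $\sum_k\alpha_k^2\lambda_k<\infty$; this already gives the inclusion ``$\supseteq$'' of $\mathcal V_0(\Omega)$ over the last (spectral) set in the statement.

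\emph{Step 2 (trace of a general $v\in\HL$).} Conversely, for $v\in\HL$ expand $v(x,y)=\sum_k c_k(y)\varphi_k(x)$ with $c_k(y)=\int_\Omega v(x,y)\varphi_k(x)\,dx\in H^1(0,\infty)$, so that the same orthogonality relations yield
\[
\int_{\cil}|\nabla v|^2=\sum_k\int_0^\infty\bigl(\lambda_k^2 c_k(y)^2+c_k'(y)^2\bigr)\,dy .
\]
The elementary one-dimensional inequality $\lambda f(0)^2\le\int_0^\infty(\lambda^2 f^2+(f')^2)\,dy$ — immediate from $\int_0^\infty(\lambda f+f')^2=\int_0^\infty(\lambda^2 f^2+(f')^2)-\lambda f(0)^2$, with equality for $f(y)=f(0)e^{-\lambda y}$, consistently with Step 1 — applied with $f=c_k$, $\lambda=\lambda_k$ and summed over $k$ gives $\sum_k\lambda_k c_k(0)^2\le\int_{\cil}|\nabla v|^2<\infty$. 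Since $tr_\Omega v=\sum_k c_k(0)\varphi_k$, the reverse inclusion follows, so $\mathcal V_0(\Omega)=\bigl\{u=\textstyle\sum_k\alpha_k\varphi_k\in L^2(\Omega):\ \sum_k\alpha_k^2\lambda_k<\infty\bigr\}$. Conceptually this is the statement that the harmonic extension minimises $\int_{\cil}|\nabla\cdot|^2$ among all extensions with a given trace, hence lies in $\HL$ whenever some extension does.

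\emph{Step 3 (identification with the weighted $H^{1/2}$ space).} It remains to match the spectral set with $\{u\in H^{1/2}(\Omega):\int_\Omega u^2/{\rm d}(x)\,dx<\infty\}$. Since $-\Delta$ with homogeneous Dirichlet conditions is a positive self-adjoint operator with $H^1_0(\Omega)=D((-\Delta)^{1/2})$, the $K$-method diagonalises in the basis $(\varphi_k)_k$ and identifies the spectral set with $[H^1_0(\Omega),L^2(\Omega)]_{1/2}=D((-\Delta)^{1/4})$, whose squared norm is exactly $\sum_k\alpha_k^2\lambda_k$. One then invokes the classical Lions--Magenes description of this interpolation space as $H^{1/2}_{00}(\Omega)=\{u\in H^{1/2}(\Omega):{\rm d}(x)^{-1/2}u\in L^2(\Omega)\}$; combining the three steps proves the proposition.

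Steps 1--2 are routine Fourier/ODE computations; the genuine difficulty is Step 3, namely controlling $u$ near $\partial\Omega$. The weight $1/{\rm d}(x)$ is critical — it corresponds to exactly half a derivative — so the equivalence between $\sum_k\alpha_k^2\lambda_k<\infty$ and the two conditions $u\in H^{1/2}(\Omega)$, $\int_\Omega u^2/{\rm d}(x)\,dx<\infty$ cannot be obtained by soft arguments: it rests on a Hardy-type inequality near the boundary (after localising and flattening $\partial\Omega$), which is the heart of the Lions--Magenes theory. A self-contained alternative would be to estimate $\int_{\cil}|\nabla v|^2$ for the harmonic extension directly against the Gagliardo seminorm of $u$ plus $\int_\Omega u^2/{\rm d}(x)\,dx$, but this amounts to re-deriving the same machinery.
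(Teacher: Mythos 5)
The paper offers no proof of this proposition at all---it is quoted verbatim from Cabr\'e--Tan \cite{ct}---and your argument correctly reconstructs the standard proof given there: the explicit harmonic extension together with the one-dimensional inequality $\lambda f(0)^2\le\int_0^\infty\bigl(\lambda^2f^2+(f')^2\bigr)\,dy$ yields the spectral characterisation, and the identification with the weighted $H^{1/2}$ space is, as you note, exactly the Lions--Magenes description of $[H^1_0(\Omega),L^2(\Omega)]_{1/2}=H^{1/2}_{00}(\Omega)$. Steps 1--2 are complete and correct (the lateral condition $v=0$ on $\partial_L\mathcal C$ is what justifies $\int_\Omega\nabla_x v\cdot\nabla\varphi_k\,dx=\lambda_k^2c_k(y)$ in Step 2, which you use implicitly), while Step 3 is honestly flagged as resting on the classical interpolation/Hardy-inequality machinery rather than being proved from scratch.
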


From these preliminaries, by \cite[Proposition 2.2]{ct}, for $u=\sum_{k=1}^\infty \alpha_k \varphi_k$, we define
\begin{equation}\label{A1/2}
(-\Delta)^{1/2}u:=\sum_{k=1}^\infty \alpha_k \lambda_k\varphi_k.
\end{equation}

With this definition in hand, the purpose of this paper is to prove a multiplicity result for problem \eqref{P}, in a situation similar to that described above for \eqref{PC}, and, in view of the previous considerations, we assume that
\begin{description}
\item$(g_1)$  $g:{\Omega} \times \mathbb R \rightarrow \mathbb R$ is a Carath\'eodory function; 
\item$(g_2)$ there exist $a_1, a_2>0$ and $p\in \left(2,2N/(N-1)\right)$ such that \eqref{crescg} holds for all $t\in \mathbb R$ and for a.e. $x \in \Omega$;
\item$(g_3)$ $g(x,t)=o(|t|)$ for $t \rightarrow 0$ uniformly in $\Omega$;
\item$(g_4)$ we have $$0<p G(x,t)=p\int_{0}^{t}{g(x,\sigma)d\sigma}\le g(x,t)t$$ for all $t \neq 0$ and for a.e. $x\in \Omega$;
\item $(g_5)$ there exists $c_1>0$ such that $G(x,t)\ge c_1|t|^p$ for all $t\in \mathbb R$ and for a.e. $x\in \Omega$.
\end{description}

\begin{remark}\label{Osservazione 1}
Note that $(g_3)$ implies that $G(x,t) =o(|t|^2)$ as $t\rightarrow 0$ uniformly in $\Omega$ and from $(g_2)$ we get that $$|G(x,t)|\le a_1|t|+\dfrac{a_2}{p}|t|^p$$ for all $t\in \mathbb R$ and for a.e. $x\in \Omega$. As a consequence, $u=0$ solves problem \eqref{P}, and we look for nontrivial solutions.
\end{remark}

In the case $g(x,t)=|t|^{p-2}t$, $2<p<2N/(N-1)$, an existence result for $\lambda=0$ is proved in Cabr\'e and Tan \cite{ct}. The proof therein can be immediately extended to the case $\lambda<\lambda_1$, but we are not aware of existence results for $\lambda\geq \lambda_1$, nor for general nonlinearities $g$. For this, we state our first result:
\begin{thm}\label{Theorem 1}
If $(g_1)-(g_5)$ hold, then for every $\lambda\in \R$ problem \eqref{P} has one nontrivial solution.
\end{thm}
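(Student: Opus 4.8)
The plan is to recast problem \eqref{P} as the Euler--Lagrange equation of a functional on $\HL$, using the Caffarelli--Silvestre/Cabr\'e--Tan extension. Given $u \in \mathcal V_0(\Omega)$, let $v \in \HL$ be its harmonic extension into the cylinder $\mathcal C = \Omega\times(0,\infty)$; by the trace characterization recalled above, $u$ solves \eqref{P} if and only if $v$ is a weak solution of the mixed boundary value problem $-\Delta v = 0$ in $\mathcal C$, $v = 0$ on $\partial_L \mathcal C$, $\partial_\nu v = \lambda \, tr_\Omega v + g(x, tr_\Omega v)$ on $\Omega\times\{0\}$. Thus one works with
\[
J(v) = \frac12 \int_{\mathcal C} |\nabla v|^2 \, dx\,dy - \frac{\lambda}{2}\int_\Omega (tr_\Omega v)^2\, dx - \int_\Omega G(x, tr_\Omega v)\, dx
\]
on $\HL$. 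By $(g_2)$ with $p < 2N/(N-1)$ and the trace embedding $\HL \hookrightarrow L^q(\Omega)$ (which is compact for $q < 2N/(N-1)$), together with Remark~\ref{Osservazione 1}, $J$ is well defined and $C^1$ on $\HL$, and its critical points correspond to solutions of \eqref{P}. I would then prove that $J$ has the Mountain Pass geometry and satisfies the Palais--Smale condition, so that the Mountain Pass Theorem yields a nonzero critical point.

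The key steps are as follows. First, \textbf{local geometry at $0$}: using $(g_3)$ (equivalently $G(x,t) = o(|t|^2)$) and the growth bound $|G(x,t)| \le a_1|t| + (a_2/p)|t|^p$, together with the compact trace embedding, one shows that for $\lambda$ fixed the quadratic-plus-lower-order part dominates on a small sphere. The subtlety is that $\lambda$ may exceed eigenvalues of $(-\Delta)^{1/2}$, so $v \mapsto \int_{\mathcal C}|\nabla v|^2 - \lambda \int_\Omega (tr_\Omega v)^2$ is \emph{not} positive definite; however, this quadratic form is still positive definite on the (finite-codimensional) subspace spanned by eigenfunctions with eigenvalue $> \lambda$, and more to the point, adding $-\int_\Omega G$ and using $G \ge c_1|t|^p$ with $p>2$ lets one still get $J(v) \ge \alpha > 0$ on a small sphere — here I would instead use a linking geometry if a pure mountain pass fails. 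Concretely: let $V^-$ be the span of eigenfunctions with eigenvalue $\le \lambda$ and $V^+$ its orthogonal complement; $J$ is coercive-from-below-by-a-positive-constant on a small sphere in $V^+$, and by $(g_5)$, $J \to -\infty$ on $V^-$ and along any ray, so the linking theorem applies with the standard cylinder-type linking set. Second, \textbf{global geometry}: fix any $w$ with $tr_\Omega w \ne 0$; by $(g_5)$, $G(x, t\,tr_\Omega w) \ge c_1 t^p |tr_\Omega w|^p$, so $J(tw) \le \frac{t^2}{2}\|\nabla w\|^2 + \frac{|\lambda| t^2}{2}\|tr_\Omega w\|_2^2 - c_1 t^p \|tr_\Omega w\|_p^p \to -\infty$ as $t\to\infty$ since $p>2$. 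Third, \textbf{Palais--Smale}: if $(v_n)$ is a PS sequence, the Ambrosetti--Rabinowitz-type condition $(g_4)$, i.e. $p\, G(x,t) \le g(x,t)t$, gives the standard estimate $p\,J(v_n) - \langle J'(v_n), v_n\rangle \ge (p/2 - 1)\|\nabla v_n\|^2 - (p/2-1)\lambda\|tr_\Omega v_n\|_2^2$, which combined with the compact trace embedding (to absorb the $\lambda$ term) forces $(\|\nabla v_n\|)$ bounded; then compactness of the trace embedding upgrades a weakly convergent subsequence to a strongly convergent one in the usual way.

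The main obstacle is handling $\lambda \ge \lambda_1$: one cannot use the naive mountain pass around $0$, and must instead either (i) perform the eigenspace splitting $\HL = V^- \oplus V^+$ and invoke a linking/saddle-point theorem, checking carefully that $J$ stays bounded below by a positive constant on the appropriate sphere in $V^+$ — which requires combining $(g_3)$ (kills the linear-in-$\|v\|^2$ error near $0$) with the spectral gap on $V^+$ — or (ii) if $\lambda$ is an eigenvalue, use $(g_4)$--$(g_5)$ (which force $g$ to genuinely help, since $G \ge c_1|t|^p$) to recover compactness and the linking inequality on the critical eigenspace, treating the degenerate directions via the $|t|^p$ term. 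A secondary, purely technical, point is verifying the compactness of the trace embedding $\HL \hookrightarrow L^p(\Omega)$ for $p$ in the stated range $(2, 2N/(N-1))$ and that $J \in C^1$; these follow from known trace-Sobolev inequalities for the extension problem (Cabr\'e--Tan) plus the Nemytskii operator theory under $(g_1)$--$(g_2)$, so I would cite them rather than reprove them.
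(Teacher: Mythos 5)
Your overall strategy is exactly the paper's: pass to the extension functional on $\HL$, use the mountain pass theorem when $\lambda<\lambda_1$, and the linking theorem with the eigenspace splitting $H_i\oplus H_i^{\bot}$ when $\lambda\in[\lambda_i,\lambda_{i+1})$. The geometric estimates you sketch (positivity on a small sphere in $H_i^{\bot}$ via $(g_3)$ and the constrained Poincar\'e inequality, nonpositivity on $H_i$, and $f_\lambda\to-\infty$ along $H_i\oplus\mathrm{span}(e_{i+1})$ via $(g_5)$ and equivalence of norms in finite dimensions) are the ones used in the paper, and your worry about $\lambda$ being an eigenvalue is handled automatically since $G\ge 0$ makes $f_\lambda\le 0$ on $H_i$ even when $\lambda=\lambda_i$.

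The one step that does not close as written is the boundedness of Palais--Smale sequences for large $\lambda$. You form $p\,J(v_n)-\langle J'(v_n),v_n\rangle$ and propose to absorb the leftover $-(p/2-1)\lambda\|tr_\Omega v_n\|_{L^2(\Omega)}^2$ ``by the compact trace embedding''. Compactness is qualitative and yields no inequality of the form $\|u\|_{L^2(\Omega)}^2\le\ve\|u\|^2+C_\ve$ with $C_\ve$ controlled along a PS sequence; and since $\lambda$ may exceed $\lambda_1$, the quadratic form $\|u\|^2-\lambda\|u\|_{L^2(\Omega)}^2$ is genuinely indefinite, so the $L^2$ term cannot be absorbed into the gradient term. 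Worse, by taking the multiplier equal to $p$ you discard exactly the term you need: $(g_4)$ then only gives $\int_\Omega[g(x,u_n)u_n-pG(x,u_n)]\,dx\ge 0$ and no superquadratic remainder survives. The paper's fix is to take $k\in(2,p)$ strictly, so that $kf_\lambda(u_n)-f_\lambda'(u_n)u_n\ge\bigl(\tfrac{k}{2}-1\bigr)\|u_n\|^2-\lambda\bigl(\tfrac{k}{2}-1\bigr)\int_\Omega u_n^2\,dx+(p-k)c_1\int_\Omega|u_n|^p\,dx$ by $(g_4)$ and $(g_5)$, and then to absorb the $L^2$ term into the $L^p$ term via the elementary Young inequality $u^2\le\ve|u|^p+D_\ve$; this is precisely where $(g_5)$ enters and where your sketch needs repair. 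The remainder of your outline (strong convergence of the weak limit via $f_\lambda'(u_n)(u_n-u)\to 0$ and the compact embedding, the $C^1$ regularity of the functional, and the identification of its critical points with solutions of \eqref{P}) matches the paper.
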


However, the previous result is standard, and we only present it for a complete description of the existence setting, as a counterpart of the result in Servadei and Valdinoci \cite{sv}, when the fractional Laplacian is represented by a nonlocal integral operator, already introduced in \cite{f,s}.

On the other hand, our main interest is providing a multiplicity theorem, which is much more involved. This result relies on the application of a critical point theorem of mixed type proved by Marino and Saccon in \cite{ms}, recalled in the Appendix, together with an additional linking theorem and a fine estimate of critical levels. More precisely, we prove the following multiplicity result in Wang's direction, the main result of this paper:
\begin{thm}\label{Theorem 2}
Assume $(g_1)-(g_5)$ hold. Then, for all $i\in \mathbb N$, $i\ge 2$, there exists $\delta_i>0$ such that problem \eqref{P} has at least three nontrivial solutions for all $\lambda \in (\lambda_i -\delta_i, \lambda_i)$.
\end{thm}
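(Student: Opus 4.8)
The plan is to recast \eqref{P} on the Caffarelli--Silvestre cylinder, so as to obtain a variational problem on $X:=\HL$, and then to apply the mixed min--max theorem of Marino and Saccon recalled in the Appendix. One works with
\[
J_\lambda(v)=\frac12\int_{\mathcal C}|\nabla v|^2\,dx\,dy-\frac\lambda2\int_\Omega (tr_\Omega v)^2\,dx-\int_\Omega G(x,tr_\Omega v)\,dx .
\]
By $(g_1)$--$(g_3)$ one has $J_\lambda\in C^1(X)$; since $2N/(N-1)$ is the critical trace exponent of $H^{1/2}$, the trace operator $tr_\Omega\colon X\to L^q(\Omega)$ is compact for $2\le q\le p$ by $(g_2)$, $u=0$ is the only a priori trivial solution (Remark~\ref{Osservazione 1}), and nontrivial critical points of $J_\lambda$ produce, via their traces, nontrivial solutions of \eqref{P}. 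Let $(\lambda_k,\phi_k)_k$ be the eigenpairs of the extended operator, $\phi_k$ the harmonic extension of $\varphi_k$. Fix $i\ge2$, assume (passing to the gap just below $\lambda_i$ if necessary) that $\lambda_{i-1}<\lambda_i$, and let $\delta_i\in(0,\lambda_i-\lambda_{i-1})$ be a parameter to be shrunk later; for $\lambda\in(\lambda_i-\delta_i,\lambda_i)$ set $X^-=\operatorname{span}\{\phi_k:\lambda_k<\lambda_i\}$, $X^0=\operatorname{span}\{\phi_k:\lambda_k=\lambda_i\}$, $X^+=(X^-\oplus X^0)^{\perp}$, so that $\dim(X^-\oplus X^0)<\infty$ and the quadratic part $Q_\lambda(v)=\|\nabla v\|_{L^2(\mathcal C)}^2-\lambda\|tr_\Omega v\|_{L^2(\Omega)}^2$ is $\le-c\|v\|^2$ on $X^-$, is a \emph{small} positive multiple of $\|v\|^2$ on $X^0$, and is $\ge c'\|v\|^2$ on $X^+$, with $c,c'>0$ independent of $\lambda$ on that interval.

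The next step is the two analytic ingredients. \emph{Compactness}: $J_\lambda$ satisfies $(PS)_c$ for every $c\in\R$; boundedness of a $(PS)_c$ sequence follows from $(g_4)$ in the Ambrosetti--Rabinowitz way with $\mu=p$ (the combination $J_\lambda(v_n)-\tfrac1p\langle J_\lambda'(v_n),v_n\rangle$ controls $Q_\lambda$ away from the finitely many negative modes, which are handled in the finite-dimensional $X^-$), and the compact trace embedding together with $(g_2)$ upgrades weak to strong convergence. \emph{Geometry and level estimates}: from $(g_3)$ (so $G=o(|t|^2)$) and $(g_2)$ there are $\rho_0,\alpha_0>0$ with $J_\lambda\ge\alpha_0$ on $\partial B_{\rho_0}\cap(X^0\oplus X^+)$; from $(g_5)$, $J_\lambda\le0$ on $X^-$ and $J_\lambda\to-\infty$ at infinity along $(X^-\oplus X^0)\oplus\R e$ for any fixed $e\in X^+$; and, crucially, since $(g_5)$ gives $\int_\Omega G(x,tr_\Omega v)\,dx\ge c_1\|tr_\Omega v\|_{L^p(\Omega)}^p$, on the finite-dimensional $X^-\oplus X^0$ one obtains $J_\lambda(v)\le \tfrac12(\lambda_i-\lambda)\|tr_\Omega v^0\|_{L^2}^2-c\|v\|_X^p$, whence
\[
0<\sup_{X^-\oplus X^0}J_\lambda\le C(\lambda_i-\lambda)^{p/(p-2)},
\]
which tends to $0$ as $\lambda\to\lambda_i^-$, so that shrinking $\delta_i$ forces $\sup_{X^-\oplus X^0}J_\lambda<\alpha_0$. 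This is the \emph{fine estimate of critical levels} that makes the linking of $X^-\oplus X^0$ with a sphere in $X^+$ work in presence of the degenerate ``neutral space'' $X^0$.

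I would then verify the $(\nabla)$-condition $(\nabla)(J_\lambda,X^-\oplus X^0,X^0\oplus X^+)$: writing $J_\lambda'=L_\lambda-K$ with $L_\lambda$ the bounded self-adjoint operator diagonalized by $(\phi_k)$ (which induces the splitting) and $K$ the gradient of the subcritical lower-order term — compact by $(g_2)$ and the compact trace embedding — a sequence $(v_n)$ with $P_{X^+}J_\lambda'(v_n)\to0$ and $\operatorname{dist}(v_n,X^-\oplus X^0)\to0$ has $K(v_n)$ precompact, and the spectral gap of $L_\lambda$ on $X^+$ then forces $v_n$ to converge to a critical point; this is the classical verification, as in Mugnai~\cite{dm}. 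At this point the hypotheses of the Marino--Saccon theorem (Appendix) are satisfied, and it yields a critical point $v_1$ with $J_\lambda(v_1)\ge\alpha_0>0=J_\lambda(0)$, hence $v_1\ne0$. A second critical point $v_2$ comes from an ordinary linking theorem applied to the sphere $\partial B_{\rho_0}\cap(X^0\oplus X^+)$ and the finite-dimensional half-space $(\overline B_R\cap X^-)\oplus[0,Re]$ with $e\in X^0$, at a level $c_2\in[\alpha_0,\sup_Q J_\lambda]$; and a third one, $v_3$, is obtained by noting that the supremum of $J_\lambda$ on $X^-\oplus X^0$ is attained at some $\bar v\ne0$ with $0<J_\lambda(\bar v)=\sup_{X^-\oplus X^0}J_\lambda<\alpha_0$, and by showing, again via the $(\nabla)$-condition, that this value is a critical value of $J_\lambda$ on all of $X$ (a deformation compatible with the splitting cannot lower it). The level estimate of the previous step, $J_\lambda(v_3)<\alpha_0\le J_\lambda(v_1),J_\lambda(v_2)$, separates $v_3$ from the other two, while a comparison of the linking dimensions (Morse indices) of the two min--max schemes separates $v_1$ from $v_2$.

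The main obstacles I expect are: (i) matching the $(\nabla)$-condition and the min--max geometry exactly to the form of the theorem in the Appendix — technical, but following the template of \cite{dm}; and (ii) the bookkeeping that makes the three critical points pairwise distinct and nontrivial, which is precisely where the smallness of $\delta_i$ and the fine level estimate $\sup_{X^-\oplus X^0}J_\lambda\le C(\lambda_i-\lambda)^{p/(p-2)}$ are indispensable. The remaining ingredients — the extension formalism, the $(PS)$ condition, and the linking estimates — are routine.
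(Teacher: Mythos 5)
Your preparatory work is essentially the paper's: the extension to the cylinder, the spectral splitting $X^-\oplus X^0\oplus X^+$, the $(PS)_c$ condition via the Ambrosetti--Rabinowitz computation with $(g_4)$--$(g_5)$, and the quantitative estimate $\sup_{X^-\oplus X^0}J_\lambda\le C(\lambda_i-\lambda)^{p/(p-2)}\to0$, which is a sharper form of Lemma \ref{Lemma 4}. But the way you produce and separate the three critical points has genuine gaps. First, you misplace the $(\nabla)$-condition: in Theorem \ref{Teorema 3} it must hold on $X_1\oplus X_3=X^-\oplus X^+$ (the complement of the eigenspace $X^0$), with the projection and the distance both taken relative to that subspace, whereas you state it for $X^-\oplus X^0$ and test it with $P_{X^+}J_\lambda'(v_n)\to0$. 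Second, the $\nabla$-theorem already delivers \emph{two} critical points in $f_\lambda^{-1}([a,b])$, and the point of the level estimate is to take $b$ small (below the $\epsilon_0$ of Lemma \ref{Lemma 2} and above $\sup f_\lambda(H_j)$), so that these two solutions sit at \emph{small} positive levels; you extract only one and assert $J_\lambda(v_1)\ge\alpha_0=\inf f_\lambda(S_{i-1}^+(\rho))$, which the theorem does not give.

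More seriously, your third solution does not exist by the argument you propose: a constrained maximizer of $J_\lambda$ on the finite-dimensional subspace $X^-\oplus X^0$ is a critical point of $J_\lambda|_{X^-\oplus X^0}$, not of $J_\lambda$ on $X$, and the $(\nabla)$-condition cannot promote it, since that condition lives on the \emph{complementary} subspace $X^-\oplus X^+$ and only \emph{excludes} critical points there at levels in $[a,b]$. Likewise, separating $v_1$ from $v_2$ by ``linking dimensions / Morse indices'' is unavailable for a functional that is merely $C^1$ and without nondegeneracy. The paper's resolution is different and is the missing idea here: the third solution comes from a \emph{classical linking at the next spectral gap} $\lambda_j<\lambda_{j+1}$ (Lemma \ref{Lemma 5}), linking the sphere $S_j^+(\rho_1)\subset X^+=H_j^\perp$, on which $f_\lambda\ge C\rho_1^2$ with $C,\rho_1$ independent of $\lambda$ near $\lambda_j$, with the set $T_{i,j+1}(R_1)\subset H_{j+1}$. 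This yields $u_3$ with $f_\lambda(u_3)\ge C\rho_1^2$, while $f_\lambda(u_1),f_\lambda(u_2)\le\sup f_\lambda(H_j)\to0$ as $\lambda\to\lambda_j^-$; shrinking $\delta_i$ until $\sup f_\lambda(H_j)<C\rho_1^2$ separates the three. You should replace your constructions of $v_2$ and $v_3$ by this second linking together with the level comparison.
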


We conclude recalling that the $\nabla$--theorem we will employ has been extensively used in several contexts, in order to prove multiplicity results of different problems, such as elliptic problems of second and fourth order, variational inequalities and reversed variational inequalities, see, for instance, \cite{marinomugnai, MMS, MS, mug4, dm, bila, OL, fwang, fwang2, wzz}, and \cite{mbms}, where the analogous multiplicity result of this paper is considered when the underlying operator is the nonlocal one studied in \cite{f,sv,s}.

\section{Extended problem, preliminary lemmas and proof of Theorem $\ref{Theorem 1}$}\label{Disuguaglianze e lemmi tecnici}

We know from \cite{ct} that solving problem \eqref{P} is equivalent to solving its extension in the cylinder $\mathcal{C}=\Omega \times (0,\infty)$, that is,
\begin{equation}\label {E}
\begin{cases}
\Delta v=0 & \mbox{in}\; \mathcal{C},\\
v=0 & \mbox{on}\; \partial_L\mathcal{C},\\
\dfrac{\partial v}{\partial\nu}=\lambda v+g(x,v) & \mbox{in}\; \Omega \times \{0\},
\end{cases}
\end{equation}
where $\partial_L \mathcal{C}=\partial \Omega \times (0, \infty)$ is the cylinder lateral surface and
$\nu$ is the outer normal at $\mathcal{C}$ in $\Omega \times \{ 0 \}$. Let us briefly recall the relation between \eqref{E} and \eqref{P}.

First, we will look for weak solutions to \eqref{E}. For this, we note that the Sobolev space
$H_{0, L}^1 (\mathcal{C})$ introduced above is an Hilbert space when endowed with the norm
\[
\|v\|=\left( \int_{\mathcal{C}}{|Dv|^2\; dxdy}\right)^{1/2},
\]
induced by the inner product
\[
\langle v,w\rangle = \int_{\mathcal C}Dv\cdot Dw\,dxdy.
\]

Following the  ``Dirichlet to Neumann'' approach in a bounded domain $\Omega$ of $\mathbb R^N$ (cfr. \cite{ct}), for a given $u\in {\cal V}_0(\Omega)$ we consider the harmonic extension $v$ of $u$, i.e. the solution of the problem
\[
\begin{cases}
\Delta v=0 & \mbox{in}\; \mathcal{C},\\
v=0 & \mbox{on}\; \partial_L\mathcal{C},\\
v=u & \mbox{in}\; \Omega \times \{0\},
\end{cases}
\]
which is well defined by \cite[Lemma 2.8]{ct}. Now, by \cite[Proposition 2.2]{ct}, we can give a definition of  $(-\Delta)^{1/2}:{\cal V}_0(\Omega)\to {\cal V}_0(\Omega)^*$, equivalent to \eqref{A1/2}, as the operator that maps the Dirichlet datum  $u$ in the Neumann value of its harmonic extension
\[
(-\Delta)^{1/2}u=\frac{\partial v}{\partial \nu}(\cdot,0),
\]
and hence, if $v$ solves \eqref{E}, then $u=tr_{\Omega}v$ solves \eqref{P}, see \cite[Proposition 2.2]{ct}.
For this reason, from now on, we will look for weak solutions to \eqref{E}.

Of course, problem \eqref{E} is variational and its solutions are critical points of the $C^1$ functional $f_{\lambda}:H_{0, L}^1(\mathcal{C})\rightarrow \mathbb R$ defined by
$$
f_{\lambda}(u)=\frac{1}{2}\int_{\mathcal{C}} {|Du|^2 dx dy}-\frac{\lambda}{2}\int_{\Omega}{u^2 dx}-\int_{\Omega}{G(x,u) dx}.
$$ 

Before attacking functional $f_\lambda$, we recall some other tools we will use in the following.  
Recalling that $({\lambda_k, \varphi_k})_k $ denote the eigenvalues and the associated eigenfunctions of $-\Delta$ with homogeneous Dirichlet condition on $\partial \Omega$ with  $({\varphi_k})_k$ an orthonormal basis of $L^2(\Omega)$, from \cite{ct}, we have
\[
H_{0, L}^1(\mathcal{C})=\Big\{v(x,y)=\sum_{k=1}^{\infty}{b_k \varphi_k(x) e^{-\lambda_k y}} \quad (x,y)\in \mathcal{C}\,: \, \sum_{k=1}^{\infty}\lambda_k b_k^2 <\infty \Big\}.
\]

Setting $e_k(x,y)=\varphi_k(x)e^{-\lambda_k y}$, we observe that the $e_k$ are orthogonal in $H_{0, L}^1(\mathcal{C})$. Now, for every integer $i\geq 1$, put
\[
H_i=\mbox{span}(e_1, \cdots, e_i) \ \mbox{ and }\ H_i^{\bot}=\overline{\mbox{span}(e_{i+1},\cdots)}.
\]
Then, a simple calculation gives the proof of the following inequalities:
\begin{prop}\label{PD}
If  $u\in H_i$, then
\begin{equation}\label{1}
\int_{\mathcal{C}}{|Du|^2\; dx dy}\le \lambda_{i}\int_{\Omega}{u^2\; dx}.
\end{equation}\end{prop}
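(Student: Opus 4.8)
The plan is to work directly in the orthogonal basis $\{e_k\}$ of $\HL$ recorded just above the statement. Given $u\in H_i$, write $u=\sum_{k=1}^i b_k e_k$ with $e_k(x,y)=\varphi_k(x)e^{-\lambda_k y}$. The first step is to compute the norm of a single basis element: differentiating $e_k$ and using the eigenvalue relation $-\Delta\varphi_k=\lambda_k^2\varphi_k$ (so that $\int_\Omega|\nabla\varphi_k|^2\,dx=\lambda_k^2$) together with $\|\varphi_k\|_{L^2(\Omega)}=1$, one obtains
\[
\|e_k\|^2=\int_{\mathcal C}|De_k|^2\,dxdy=\left(\int_\Omega\bigl(|\nabla\varphi_k|^2+\lambda_k^2\varphi_k^2\bigr)\,dx\right)\int_0^\infty e^{-2\lambda_k y}\,dy=2\lambda_k^2\cdot\frac{1}{2\lambda_k}=\lambda_k.
\]
By the stated orthogonality of the $e_k$ in $\HL$, it then follows that $\int_{\mathcal C}|Du|^2\,dxdy=\|u\|^2=\sum_{k=1}^i\lambda_k b_k^2$.

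Next I would identify the quantity $\int_\Omega u^2\,dx$ appearing on the right-hand side, which for $u\in\HL$ means the $L^2(\Omega)$-norm of the trace $tr_\Omega u$. Since $tr_\Omega u(x)=u(x,0)=\sum_{k=1}^i b_k\varphi_k(x)$ and $(\varphi_k)_k$ is orthonormal in $L^2(\Omega)$, we get $\int_\Omega u^2\,dx=\sum_{k=1}^i b_k^2$. Finally, because the Dirichlet eigenvalues are arranged in nondecreasing order, $\lambda_k\le\lambda_i$ for every $k\le i$, whence
\[
\int_{\mathcal C}|Du|^2\,dxdy=\sum_{k=1}^i\lambda_k b_k^2\le\lambda_i\sum_{k=1}^i b_k^2=\lambda_i\int_\Omega u^2\,dx,
\]
which is exactly \eqref{1}.

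There is no substantial obstacle in this argument; it is a one-line spectral computation once the basis is in place. The only points requiring a little care are the explicit evaluation of $\|e_k\|^2$ (which simultaneously re-derives the summability condition $\sum_k\lambda_k b_k^2<\infty$ characterizing $\HL$) and keeping track of the fact that, throughout, $\int_\Omega u^2\,dx$ refers to the trace of $u$ on $\Omega\times\{0\}$. I note in passing that the same computation carried out on $H_i^\perp$, using $\lambda_k\ge\lambda_{i+1}$ for $k\ge i+1$, yields the reverse inequality on that subspace, which is presumably what will be invoked later for the linking geometry of $f_\lambda$.
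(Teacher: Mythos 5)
Your computation is correct and is precisely the ``simple calculation'' the paper alludes to without writing out: expand $u$ in the orthogonal basis $e_k=\varphi_k e^{-\lambda_k y}$, note $\|e_k\|^2=\lambda_k$ and $\|tr_\Omega e_k\|_{L^2(\Omega)}^2=1$, and use $\lambda_k\le\lambda_i$ for $k\le i$. (You correctly follow the convention of Proposition 1.1, $-\Delta\varphi_k=\lambda_k^2\varphi_k$, which is the one that makes $e_k$ harmonic.)
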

\begin{prop}[Constrained Poincar\'e inequality]\label{PDV}
If $u\in H_i^{\bot}$, then
\begin{equation}\label{2}
\int_{\mathcal{C}}{|Du|^2\; dx dy}\ge \lambda_{i+1}\int_{\Omega}{u^2\; dx}.
\end{equation}
\end{prop}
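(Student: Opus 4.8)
The plan is to diagonalize both quadratic forms appearing in \eqref{2} simultaneously by means of the basis $\{e_k\}$, so that the inequality reduces to the monotonicity of the eigenvalues. First I would record the two basic identities on the basis functions $e_k(x,y)=\varphi_k(x)e^{-\lambda_k y}$. Since $-\Delta\varphi_k=\lambda_k^2\varphi_k$ in $\Omega$ with $\varphi_k=0$ on $\partial\Omega$, an integration by parts gives $\int_\Omega|\nabla\varphi_k|^2\,dx=\lambda_k^2\int_\Omega\varphi_k^2\,dx=\lambda_k^2$, and therefore
\[
\|e_k\|^2=\int_{\mathcal C}|De_k|^2\,dxdy=\Big(\int_0^\infty e^{-2\lambda_k y}\,dy\Big)\int_\Omega\big(|\nabla\varphi_k|^2+\lambda_k^2\varphi_k^2\big)\,dx=\frac{1}{2\lambda_k}\cdot 2\lambda_k^2=\lambda_k .
\]
An entirely analogous computation, using $\int_\Omega\nabla\varphi_j\cdot\nabla\varphi_k\,dx=\lambda_j^2\int_\Omega\varphi_j\varphi_k\,dx=0$ and $\int_\Omega\varphi_j\varphi_k\,dx=0$ for $j\neq k$, shows $\langle e_j,e_k\rangle=0$, i.e. the $e_k$ are pairwise orthogonal in $H_{0,L}^1(\mathcal C)$, as already observed in the text.

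Next I would expand a generic $u\in H_i^{\bot}$ as $u=\sum_{k\geq i+1}b_k e_k$ and use this orthogonality to get $\int_{\mathcal C}|Du|^2\,dxdy=\sum_{k\geq i+1}\lambda_k b_k^2$. On the other hand $tr_\Omega e_k=\varphi_k$, so $tr_\Omega u=\sum_{k\geq i+1}b_k\varphi_k$ and, since $(\varphi_k)_k$ is orthonormal in $L^2(\Omega)$, $\int_\Omega u^2\,dx=\sum_{k\geq i+1}b_k^2$. As the sequence $(\lambda_k)_k$ is nondecreasing, $\lambda_k\geq\lambda_{i+1}$ for every $k\geq i+1$, whence
\[
\int_{\mathcal C}|Du|^2\,dxdy=\sum_{k\geq i+1}\lambda_k b_k^2\geq\lambda_{i+1}\sum_{k\geq i+1}b_k^2=\lambda_{i+1}\int_\Omega u^2\,dx,
\]
which is exactly \eqref{2}. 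The same argument, with the index $k$ running over $1,\dots,i$ and the bound $\lambda_k\leq\lambda_i$ in place of $\lambda_k\geq\lambda_{i+1}$, simultaneously yields Proposition~\ref{PD}.

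Since the computation is elementary (the authors themselves call it ``a simple calculation''), I do not expect a genuine obstacle; the only points deserving care are the term-by-term justifications, namely that an element of $H_i^{\bot}$ really does admit an expansion $\sum_{k\geq i+1}b_k e_k$ converging in $H_{0,L}^1(\mathcal C)$ with $\sum_{k\geq i+1}\lambda_k b_k^2<\infty$, that the trace operator is continuous so that $tr_\Omega u=\sum_{k\geq i+1}b_k\varphi_k$ in $L^2(\Omega)$, and that the $y$-integration may be interchanged with the summation (which is legitimate by monotone convergence, or by Parseval once the orthogonality of the $e_k$ is established). All of these are the ``standard results'' from \cite{ct} quoted above, and once they are invoked the inequality follows solely from the monotonicity of the $\lambda_k$.
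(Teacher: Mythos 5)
Your proof is correct and is precisely the ``simple calculation'' the paper alludes to without writing out: diagonalizing both quadratic forms in the orthogonal basis $e_k$, using $\|e_k\|^2=\lambda_k$ and Parseval for the traces, and concluding from the monotonicity of the $\lambda_k$. The computations ($\|e_k\|^2=\lambda_k$, orthogonality of the $e_k$, $\int_\Omega u^2=\sum_{k\ge i+1}b_k^2$) all check out, so there is nothing to add.
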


We will also use the continuous inclusions (see \cite[Lemma 2.4]{ct})
\begin{equation}\label{cont}
H_{0, L}^1(\mathcal{C}) \hookrightarrow L^r(\Omega) \quad \mbox{for all $r \in \left[1,\frac{2N}{N-1}\right]$},
\end{equation}
and the compact ones (see \cite[Lemma 2.5]{ct})
\begin{equation}\label{imcomp}
H_{0, L}^1(\mathcal{C}) \hookrightarrow L^r(\Omega) \quad \mbox{for all $r \in \left[1,\frac{2N}{N-1}\right)$}.
\end{equation}

Now, we have all the ingredients to look for critical points of functional $f_\lambda$, i.e. to solve problem \eqref{E}. As usual, the first step in applying variational methods is the following result:
\begin{prop}
If $c\in\R$, then $f_\lambda$ satisfies the $(PS)_c$ condition, namely: every sequence $(u_n)_n$ such that $f_\lambda(u_n)\to c$ and $f_\lambda'(u_n)\to 0$ as $n\to \infty$ has a converging subsequence.
\end{prop}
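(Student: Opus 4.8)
The plan is to prove the Palais--Smale condition for $f_\lambda$ by the standard two-step argument: first show that any $(PS)_c$ sequence is bounded in $\HL$, and then upgrade weak convergence to strong convergence using the compact embedding \eqref{imcomp}. Let $(u_n)_n\subset\HL$ satisfy $f_\lambda(u_n)\to c$ and $f_\lambda'(u_n)\to 0$.

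For boundedness, I would combine the energy and derivative estimates in the usual Ambrosetti--Rabinowitz fashion, using $(g_4)$ in the form $pG(x,t)\le g(x,t)t$. Writing
\[
f_\lambda(u_n)-\frac1p\langle f_\lambda'(u_n),u_n\rangle
=\Bigl(\frac12-\frac1p\Bigr)\Bigl(\|u_n\|^2-\lambda\int_\Omega u_n^2\,dx\Bigr)+\int_\Omega\Bigl(\frac1p g(x,u_n)u_n-G(x,u_n)\Bigr)dx,
\]
the last integral is nonnegative by $(g_4)$, so we control $\|u_n\|^2-\lambda\int_\Omega u_n^2\,dx$ by $C(1+\|u_n\|)$. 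When $\lambda<\lambda_1$ the quadratic form on the left is coercive and we are done immediately; for general $\lambda\in\R$ (including $\lambda\ge\lambda_1$, which is the case of interest here) one must be more careful. The clean route is to split $u_n=u_n^-+u_n^+$ along $H_i\oplus H_i^\perp$ with $\lambda_i\le\lambda<\lambda_{i+1}$: on $H_i^\perp$ the constrained Poincar\'e inequality \eqref{2} gives $\|u_n^+\|^2-\lambda\int_\Omega (u_n^+)^2\,dx\ge(1-\lambda/\lambda_{i+1})\|u_n^+\|^2$, bounding $\|u_n^+\|$, while on the finite-dimensional space $H_i$ one uses $\langle f_\lambda'(u_n),u_n^-\rangle=o(\|u_n\|)$ together with \eqref{1}, the subcritical growth bound $|g(x,t)|\le a_1+a_2|t|^{p-1}$ from $(g_2)$, and the embedding \eqref{cont} into $L^p(\Omega)$, to show $\|u_n^-\|$ cannot grow faster than a sublinear power of $\|u_n\|$; since $p>2$ this forces $(\|u_n\|)_n$ to be bounded. (If $\lambda$ is itself an eigenvalue the same works with $\lambda_i<\lambda$ replaced by an equality, absorbing the degenerate direction into the estimate on $H_i$.)

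Once $(u_n)_n$ is bounded, pass to a subsequence with $u_n\rightharpoonup u$ weakly in $\HL$; by the compact embedding \eqref{imcomp} we may assume $u_n\to u$ strongly in $L^r(\Omega)$ for every $r\in[1,2N/(N-1))$, in particular in $L^2(\Omega)$ and in $L^p(\Omega)$ (note $p<2N/(N-1)$ by $(g_2)$), and $u_n\to u$ a.e. in $\Omega$. Writing out $\langle f_\lambda'(u_n)-f_\lambda'(u_m),u_n-u_m\rangle$ we get
\[
\|u_n-u_m\|^2=\langle f_\lambda'(u_n)-f_\lambda'(u_m),u_n-u_m\rangle+\lambda\int_\Omega(u_n-u_m)^2\,dx+\int_\Omega\bigl(g(x,u_n)-g(x,u_m)\bigr)(u_n-u_m)\,dx.
\]
The first term tends to $0$ since $f_\lambda'(u_n)\to0$ and $(u_n-u_m)_n$ is bounded; the second tends to $0$ by strong $L^2$-convergence; for the third, the growth bound $(g_2)$ and H\"older's inequality give $|g(x,u_n)-g(x,u_m)|(u_n-u_m)$ dominated in $L^1$ by a bounded combination of $\|u_n-u_m\|_{L^1}$ and $\|u_n-u_m\|_{L^p}^{?}$ terms that vanish along the strongly convergent subsequence. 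Hence $(u_n)_n$ is Cauchy in $\HL$ and converges strongly.

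The main obstacle is the boundedness step when $\lambda\ge\lambda_1$: the quadratic part of $f_\lambda$ is indefinite, so one cannot simply read off coercivity, and the argument genuinely needs the spectral splitting $\HL=H_i\oplus H_i^\perp$ together with Propositions~\ref{PD} and \ref{PDV} and the superquadratic exponent $p>2$ coming from $(g_4)$. The convergence step is then routine given the compactness \eqref{imcomp} and the subcritical growth $(g_2)$.
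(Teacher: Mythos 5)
Your two-step structure (boundedness, then compactness via \eqref{imcomp}) matches the paper, and your convergence step is essentially the paper's. The problem is the boundedness step for $\lambda\ge\lambda_1$, where you take a spectral-splitting route and the key estimate does not close as stated. From $f_\lambda(u_n)-\tfrac1p\langle f_\lambda'(u_n),u_n\rangle$ with only $(g_4)$ you control $\|u_n\|^2-\lambda\int_\Omega u_n^2\,dx$, but on $H_i$ this quadratic form is negative (semi)definite, so the inequality $(1-\lambda/\lambda_{i+1})\|u_n^+\|^2\le C(1+\|u_n\|)+(\lambda/\lambda_1-1)\|u_n^-\|^2$ only bounds $\|u_n^+\|$ in terms of $\|u_n^-\|$. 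To bound $\|u_n^-\|$ you test with $u_n^-$ and invoke $(g_2)$ plus the embedding \eqref{cont}; but these give $\bigl|\int_\Omega g(x,u_n)u_n^-\,dx\bigr|\lesssim \|u_n^-\|\bigl(1+\|u_n\|_{L^p}^{p-1}\bigr)\lesssim\|u_n^-\|\,\|u_n\|^{p-1}$, and $p-1>1$, so the growth you obtain for $\|u_n^-\|$ is \emph{super}linear in $\|u_n\|$, not sublinear as you claim. To make your route work you would first need the a priori bound $\int_\Omega|u_n|^p\,dx\le C(1+\|u_n\|)$, which comes from choosing $k=2$ in the Ambrosetti--Rabinowitz estimate (so that the $\lambda$-terms cancel) together with $(g_5)$ --- precisely the hypothesis you set aside in this step. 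Moreover, your parenthetical treatment of the resonant case $\lambda=\lambda_i$ is not a proof: there the coefficient $\lambda/\lambda_i-1$ in your $H_i$-estimate vanishes and the degenerate directions are exactly where the argument breaks down.

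The paper avoids all of this: it never splits the space. Taking $k\in(2,p)$, conditions $(g_4)$ and $(g_5)$ give $kf_\lambda(u_n)-f_\lambda'(u_n)u_n\ge(\tfrac k2-1)\|u_n\|^2-\lambda(\tfrac k2-1)\int_\Omega u_n^2\,dx+(p-k)c_1\int_\Omega|u_n|^p\,dx$, and the pointwise Young inequality $u_n^2\le\ve|u_n|^p+D_\ve$ absorbs the entire indefinite term $\lambda\int_\Omega u_n^2\,dx$ into the coercive $L^p$-term, yielding $(\tfrac k2-1)\|u_n\|^2\le M+N\|u_n\|+D_\ve'$ for every $\lambda\in\R$ at once. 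You should adopt this mechanism (or at least derive the linear-in-$\|u_n\|$ bound on $\int_\Omega|u_n|^p\,dx$ from $(g_4)$--$(g_5)$ before running your splitting argument); as written, your boundedness proof has a genuine gap.
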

\begin{proof}
Let $(u_n)_n\subset H_{0, L}^1(\mathcal{C})$ be a Palais--Smale sequence at level $c\in\R$. Then, taken $k\in (2,p)$, there exis $M,N>0$ such that
\begin{equation}\label{psc1}
kf_\lambda(u_n)-f_\lambda'(u_n)u_n\leq M+N\|u_n\| \mbox{ for all }n\in \N.
\end{equation}
On the other hand, by $(g_4)$ and $(g_5)$, we get
\begin{equation}\label{psc2}
\begin{aligned}
&kf_\lambda(u_n)-f_\lambda'(u_n)u_n\\
&=\left(\frac{k}{2}-1\right)\|u_n\|^2-\lambda\left(\frac{k}{2}-1\right) \int_\Omega u_n^2dx+\int_\Omega[g(x,u_n)u_n-kG(x,u_n)]dx\\
&\geq \left(\frac{k}{2}-1\right)\|u_n\|^2-\lambda\left(\frac{k}{2}-1\right) \int_\Omega u_n^2dx+(p-k)\int_\Omega G(x,u_n)\,dx\\
& \geq \left(\frac{k}{2}-1\right)\|u_n\|^2-\lambda\left(\frac{k}{2}-1\right) \int_\Omega u_n^2dx+(p-k)c_1\int_\Omega |u_n|^pdx.
\end{aligned}
\end{equation}
By Young's inequality, for every $\ve>0$ there exists $D_\ve>0$ such that
\[
u_n^2\leq \ve |u_n|^p+D_\ve.
\]
Hence, \eqref{psc2} implies that
\[
kf_\lambda(u_n)-f_\lambda'(u_n)u_n\geq \left(\frac{k}{2}-1\right)\|u_n\|^2+\left[(p-k)c_1-\ve |\lambda| \left(\frac{k}{2}-1\right)\right]\int_\Omega |u_n|^pdx-D_\ve.
\]
Choosing $\ve$ sufficiently small, we finally get
\[
kf_\lambda(u_n)-f_\lambda'(u_n)u_n\geq \left(\frac{k}{2}-1\right)\|u_n\|^2-D_\ve,
\]
and by \eqref{psc1}, we get that $(u_n)_n$ is bounded. Then, we can assume that $u_n\rightharpoonup u$ in $H_{0, L}^1(\mathcal{C})$ and $u_n\to u$ in $L^r(\Omega)$ for all $r\in \left[1,2N/(N-1)\right)$.

Since $f_\lambda'(u_n)(u_n-u)\to 0$ as $n\to \infty$, we immediately get that that $u_n$ converges strongly to $u$, i.e. $(PS)_c$ holds.
\end{proof}

At this point we can prove Theorem \ref{Theorem 1}:
\begin{proof}[Proof of Theorem $1.3$]
First of all we observe that, from the Remark \ref{Osservazione 1}, $f_{\lambda}(0)=0$ and by $(g_2)$ and $(g_3)$, we get that, given $\epsilon>0$, there exists $C_\epsilon>0$ such that 
\begin{equation}\label{G}
G(x,s)\le \epsilon |u|^2+C_\epsilon |s|^p
\end{equation}
for a.e. $x\in \Omega$ and all $s\in \R$.

Now, we have to distinguish two cases: $\lambda<\lambda_1$ and $\lambda \in [\lambda_i,\lambda_{i+1})$, for some $i\in\mathbb N$.\\
\textit{First case: $\lambda<\lambda_1$.} 
We want to apply the mountain pass theorem (see \cite{ar}). Supposing $\lambda>0$ (the other case being easier), by \eqref{G}, the Poincar\'e inequality \eqref{1} for $i=0$ and by the continuous embedding of $H^1_{0,L}(\mathcal C)$ in $L^2(\Omega)$ and in $L^p(\Omega)$, we have that
\[
\begin{aligned}
f_{\lambda}(u)&\ge \dfrac{1}{2}\|u\|^{2}-\dfrac{\lambda}{2\lambda_1}\|u\|^2-\dfrac{\epsilon}{2} \|u\|_{L^2(\Omega)}^2-C_\epsilon \|u\|_{L^p(\Omega)}^p\\
&\ge \dfrac{1}{2}\Big(1-\dfrac{\lambda}{\lambda_1}-\dfrac{c\epsilon}{\lambda_1}\Big)\|u\|^2-C\|u\|^p,
\end{aligned}
\]
for some absolute constants $c,C>0$. Now, choosing $\epsilon$ enough small, we can suppose 
$A=\dfrac{1}{2}\Big(1-\dfrac{\lambda}{\lambda_1}-\dfrac{c\epsilon}{\lambda_1}\Big)>0$ and taking $\rho$ sufficiently small we have that 
\[
\inf_{\|u\|=\rho} f_{\lambda}(u)\ge \alpha>0
\]
for some $\alpha>0$. Moreover, if $u\neq 0$ and $t>0$, by $(g_5)$ we have
\[
\begin{aligned}
f_{\lambda}(tu)&=\dfrac{t^2}{2}\|u\|^2-\dfrac{\lambda}{2}t^2\|u\|_2^2-\int_\Omega{G(x, tu)\; dx}\\
&\le \dfrac{t^2}{2}\|u\|^2-\dfrac{\lambda}{2}t^2\|u\|_2^2-c_1t^p\|u\|_p^p\rightarrow -\infty,
\end{aligned}
\]
since $p>2$. Recalling that the $(PS)_c$-condition holds  for all $c\in \mathbb R$, the Mountain Pass Theorem implies that \eqref{P} has a nontrivial solution.

\textit{Second case: $\lambda \in [\lambda_i,\lambda_{i+1})$ for some $i\in \N$.}  
In this case we want to apply the linking theorem (see the Appendix).
If $u\in H_i$, from $\eqref{1}$ and $(g_4)$, we have
\[
f_{\lambda}(u)\le \Big(\dfrac{\lambda_i}{2}-\dfrac{\lambda}{2}\Big)\int_\Omega u^2dx-\int_{\Omega}{G(x,u)\;dx}\le 0.
\]
Moreover, by \eqref{G} and \eqref{2}, if $u\in H_i^\bot$, then we have
\[
\begin{aligned}
f_{\lambda}(u)&\ge \dfrac{1}{2}\|u\|^{2}-\dfrac{\lambda}{2\lambda_{i+1}}\|u\|^2-\int_\Omega{G(x,u)\;dx}\\
&\ge \dfrac{1}{2}\Big(1-\dfrac{\lambda}{\lambda_{i+1}}\Big)\|u\|^2-\dfrac{\epsilon}{2}\int_\Omega u^2dx-C_\epsilon \int_\Omega u^pdx>0.
\end{aligned}
\]
Thus, by \eqref{cont}, we obtain
\[
f_{\lambda}(u)\geq  \dfrac{1}{2}\Big(1-\dfrac{\lambda}{\lambda_{i+1}}-\dfrac{c\epsilon}{2}\Big)\|u\|^2-C\|u\|^p
\]
for some $c,C>0$ and for all $u\in H_i^\bot$. Choosing $\epsilon$ and $\rho>0$ small enough, we get
\[
\inf_{u\in H_i^\perp,\above 0pt  \|u\|=\rho} f_{\lambda}(u)\ge \alpha>0.
\]
Finally, if $u\in H_i$ and $t>0$, by $(g_5)$ we have
\[
\begin{aligned}
f_{\lambda}(u+te_{i+1})&=\dfrac{1}{2}\|u+te_{i+1}\|^2-\dfrac{\lambda}{2}\int_\Omega (u+te_{i+1})^2dx-\int_{\Omega}{G(x,u+te_{i+1})\;dx}\\
&\le \dfrac{1}{2}\|u+te_{i+1}\|^2-\dfrac{\lambda}{2}\int_\Omega (u+te_{i+1})^2dx-c_1\int_\Omega|u+te_{i+1}|^pdx\rightarrow -\infty
\end{aligned}
\]
as $t\to \infty$, since all norms are equivalent in $H_i\oplus (e_{i+1})$.

Recalling that $(PS)_c$-condition holds for all $c\in \mathbb R$, the Linking Theorem implies that problem \eqref{P} has a nontrivial solution.
\end{proof}
 
\section{Proof of Theorem $\ref{Theorem 2}$}\label{Principal Theorem}
 
In this section we prove Theorem \ref{Theorem 2}, the main contribution of this paper. For this, from now on, we assume that there exist $i\leq j$ in $\N$ such that $\lambda_{i-1}<\lambda_{i}=\cdots=\lambda_{j}<\lambda_{j+1}$.

Let us start by introducing some notations. If $i<j$ in $\mathbb N$, we introduce the following sets:
$$S_{j}^{+}(\rho)=\{u\in H_{j}^{\bot} :\; \|u\|=\rho \},$$ and 
$$ T_{i,j}(R)=\Big\{u\in H_{j}\; : \; \|u\|=R\Big\}\cup \Big\{u\in H_{i} :\; \|u\|\le R\Big\}.$$

We can now state our first Lemma.
\begin{lemma}\label{Lemma 1}
Assume $i, j\ge 2$ are such that $\lambda_{i-1}<\lambda_{i}=\cdots=\lambda_{j}<\lambda_{j+1}$ and $\lambda \in (\lambda_{i-1}, \lambda_{j})$. Then there exist $R$ and $\rho$ with $R>\rho>0$ such that $$\sup f_{\lambda}(T_{i-1,j}(R))<\,\inf f_{\lambda}(S_{i-1}^{+}(\rho)).$$
\end{lemma}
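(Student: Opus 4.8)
The plan is to exploit the splitting $H_{0,L}^1(\mathcal C)=H_{i-1}\oplus\big(H_{j}\cap H_{i-1}^\perp\big)\oplus H_j^\perp$ and to produce the two estimates separately, first showing that $f_\lambda$ is bounded above by a negative constant on $T_{i-1,j}(R)$ for $R$ large, and then that $f_\lambda$ is bounded below by a positive constant on $S_{i-1}^+(\rho)$ for $\rho$ small; since these bounds are independent of each other, choosing $\rho$ small and $R$ large will give the strict inequality. For the estimate on $S_{i-1}^+(\rho)$: if $u\in H_{i-1}^\perp$ with $\|u\|=\rho$, then by the constrained Poincaré inequality \eqref{2} (applied with index $i-1$, so that $\int_\Omega u^2\,dx\le \|u\|^2/\lambda_i$) and the growth bound \eqref{G}, together with the continuous embeddings \eqref{cont}, one gets
\[
f_\lambda(u)\ge \frac12\Big(1-\frac{\lambda}{\lambda_i}-c\epsilon\Big)\|u\|^2-C\|u\|^p,
\]
and since $\lambda<\lambda_j=\lambda_i$, choosing $\epsilon$ small makes the quadratic coefficient positive; as $p>2$, taking $\rho$ small yields $\inf_{S_{i-1}^+(\rho)}f_\lambda\ge\alpha>0$. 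This is essentially the computation already carried out in the "second case" of the proof of Theorem~\ref{Theorem 1}.

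For the estimate on $T_{i-1,j}(R)$, I would treat the two pieces of $T_{i-1,j}(R)$ in turn. On $\{u\in H_{i-1}:\|u\|\le R\}$: since $u\in H_{i-1}$, inequality \eqref{1} gives $\|u\|^2\le\lambda_{i-1}\int_\Omega u^2\,dx$, hence
\[
f_\lambda(u)\le\frac12\Big(\lambda_{i-1}-\lambda\Big)\int_\Omega u^2\,dx-\int_\Omega G(x,u)\,dx\le 0
\]
because $\lambda>\lambda_{i-1}$ and $G\ge 0$ by $(g_4)$; in fact on this piece $f_\lambda\le 0$ regardless of $R$. On $\{u\in H_j:\|u\|=R\}$: write $u=v+w$ with $v\in H_{i-1}$ and $w\in H_j\cap H_{i-1}^\perp$. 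Using $(g_5)$, $G(x,u)\ge c_1|u|^p$, and on the finite-dimensional space $H_j$ all norms are equivalent, so $\int_\Omega|u|^p\,dx\ge c_0\|u\|^p$ for some $c_0>0$; also $-\frac\lambda2\int_\Omega u^2\,dx\le C'\|u\|^2$ since $\lambda$ may be negative, but $\frac12\|u\|^2-\frac\lambda2\int_\Omega u^2\,dx\le C''\|u\|^2$ in any case on the finite-dimensional $H_j$. Therefore
\[
f_\lambda(u)\le C''\|u\|^2-c_1c_0\|u\|^p=C''R^2-c_1c_0R^p\to-\infty
\]
as $R\to\infty$ because $p>2$. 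Hence there is $R_0$ such that for all $R\ge R_0$ one has $\sup f_\lambda(T_{i-1,j}(R))<0$.

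Finally, with $\rho$ fixed so that $\inf f_\lambda(S_{i-1}^+(\rho))\ge\alpha>0$ and then $R\ge R_0$ enlarged if necessary so that $R>\rho$ and $\sup f_\lambda(T_{i-1,j}(R))<0<\alpha\le\inf f_\lambda(S_{i-1}^+(\rho))$, the desired strict inequality holds. The only mild subtlety — the point I expect to require the most care — is the uniformity of the estimate on the cylindrical part $\{u\in H_j:\|u\|=R\}$: one must ensure the quadratic-in-$R$ term is controlled by a constant times $\|u\|^2$ uniformly over this sphere (which is immediate from equivalence of norms on the finite-dimensional $H_j$ and the embedding $H_{0,L}^1(\mathcal C)\hookrightarrow L^2(\Omega)$), and that the constant $c_0$ in $\int_\Omega|u|^p\,dx\ge c_0\|u\|^p$ does not degenerate — again guaranteed by $\dim H_j<\infty$, so that $u\mapsto\|u\|_{L^p(\Omega)}$ is a norm on $H_j$ comparable to $\|\cdot\|$. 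Everything else is a routine repetition of the linking estimates from the proof of Theorem~\ref{Theorem 1}.
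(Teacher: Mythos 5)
Your proposal is correct and follows essentially the same route as the paper: a positive lower bound on $S_{i-1}^{+}(\rho)$ from \eqref{G}, the constrained Poincar\'e inequality \eqref{2} and the embeddings, the observation that $f_\lambda\le 0$ on $H_{i-1}$ via \eqref{1} and $G\ge 0$, and $f_\lambda\to-\infty$ on the finite-dimensional $H_j$ via $(g_5)$ and equivalence of norms. The only cosmetic slip is the remark that ``$\lambda$ may be negative'': since $i\ge 2$ and $\lambda\in(\lambda_{i-1},\lambda_j)$, in fact $\lambda>\lambda_{i-1}>0$, but your cruder bound on the quadratic part works regardless.
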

\begin{proof}
From \eqref{G}, \eqref{2} and by the compact embedding in $L^2(\Omega)$ and in $L^p(\Omega)$ given in \eqref{imcomp}, we have the existence of $\rho>0$ such that
$$
\mbox{inf}\;f_{\lambda}(S_{i-1}^{+}(\rho))>0.
$$

Moreover, it is clear that  $f_{\lambda}(H_{i-1})\le 0$. We conclude the proof by showing that 
$$
\lim_{\substack{\|u\| \to \infty \\ u\in H_{j}}} f_{\lambda}(u)=-\infty.
$$
Such a result easily follows from $(g_5)$ and from inequality \eqref{1}. Indeed, if $u\in H_{j}$, then
$$f_{\lambda}(u)\le \dfrac{1}{2}\|u\|^{2}-\dfrac{\lambda}{2\lambda_{j}}\|u\|^{2}-c_1\|u\|_p^p,$$ and since all norms in $H_j$ are equivalent, the lemma follows.
\end{proof}

Now take
\[
a\in \Big(\mbox{sup}\;f_{\lambda}(T_{i-1, j}(R), \mbox{inf}\;f_{\lambda}(S_{i-1}^{+}(\rho))\Big)
\]
and
$b>\mbox{sup}\;f_{\lambda} (\overline{B_{j}(R)})$,
where $B_{j}(R)$ is the ball in $H_{j}$ with radius $R$. Then, we have the following lemma.

\begin{lemma}\label{Lemma 2}
Suppose there exist integers $i, j \ge 2$ such that $\lambda_{i-1}<\lambda_{i}=\cdots=\lambda_{j}<\lambda_{j+1}$.Then, for every $\delta >0$ there exists $\epsilon_{0}>0$ such that $\forall \; \lambda \in [\lambda_{i-1}+\delta, \lambda_{j+1}-\delta]$, the only critical point $u$ of $f_{\lambda}$ in $H_{i-1}\oplus H_{j}^{\bot}$ such that $f_{\lambda}(u)\in [-\epsilon_{0}, \epsilon_{0}]$, is the trivial one.
\end{lemma}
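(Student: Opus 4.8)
The plan is to argue by contradiction. The crux is an \emph{a priori} lower bound $\|u\|\ge r_{0}>0$, uniform in $\lambda\in[\lambda_{i-1}+\delta,\lambda_{j+1}-\delta]$, for every nontrivial critical point $u$ of $f_{\lambda}$ lying in $H_{i-1}\oplus H_{j}^{\bot}$; granted this, if no uniform $\epsilon_{0}$ existed one could produce nontrivial critical points of functionals $f_{\lambda_{n}}$ with energy tending to $0$, and a compactness argument would force them to converge strongly to $0$, contradicting the lower bound.

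First I would establish the uniform bound. Fix $\delta>0$; since $i\ge 2$ one has $0<\lambda_{i-1}<\lambda_{j+1}$, so every admissible $\lambda$ obeys $0<\lambda_{i-1}<\lambda<\lambda_{j+1}$ with gaps at least $\delta$. Let $u=u^{-}+u^{+}$, $u^{-}\in H_{i-1}$, $u^{+}\in H_{j}^{\bot}$, be a nontrivial critical point of $f_{\lambda}$ in $H_{i-1}\oplus H_{j}^{\bot}$. Since the $e_{k}$ are orthogonal in $\HL$ and their traces $\varphi_{k}$ are orthonormal in $L^{2}(\Omega)$, testing the equation against $u^{+}$ and against $u^{-}$ (both directions belong to $H_{i-1}\oplus H_{j}^{\bot}$) gives
\[
\|u^{+}\|^{2}-\lambda\int_{\Omega}(u^{+})^{2}\,dx=\int_{\Omega}g(x,u)u^{+}\,dx,\qquad \|u^{-}\|^{2}-\lambda\int_{\Omega}(u^{-})^{2}\,dx=\int_{\Omega}g(x,u)u^{-}\,dx.
\]
By Proposition \ref{PDV} and $\lambda\le\lambda_{j+1}-\delta$, the left-hand side of the first identity is $\ge(\delta/\lambda_{j+1})\|u^{+}\|^{2}$; by Proposition \ref{PD} and $\lambda\ge\lambda_{i-1}+\delta$, that of the second is $\le-(\delta/\lambda_{i-1})\|u^{-}\|^{2}$. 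On the other hand $(g_{2})$ and $(g_{3})$ give, for each $\epsilon>0$, a constant $C_{\epsilon}>0$ with $|g(x,t)|\le\epsilon|t|+C_{\epsilon}|t|^{p-1}$, whence, by H\"older's inequality and the embeddings \eqref{cont}, $\big|\int_{\Omega}g(x,u)u^{\pm}\,dx\big|\le\epsilon c\|u\|^{2}+C_{\epsilon}c'\|u\|^{p}$ with $c,c'$ independent of $\lambda$. Adding the two estimates and using $\|u\|^{2}=\|u^{-}\|^{2}+\|u^{+}\|^{2}$ (valid since $u\in H_{i-1}\oplus H_{j}^{\bot}$), one obtains $\kappa\|u\|^{2}\le 2\epsilon c\|u\|^{2}+2C_{\epsilon}c'\|u\|^{p}$ with $\kappa=\delta/\lambda_{j+1}$; choosing $\epsilon$ with $2\epsilon c\le\kappa/2$ and dividing by $\|u\|^{2}\ne 0$ (here $p>2$ is used) yields $\|u\|\ge r_{0}$ for some $r_{0}=r_{0}(\delta)>0$, independent of $\lambda$ and of $u$.

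Then I would close the argument by contradiction. If the lemma failed there would be $\delta>0$, $\lambda_{n}\in[\lambda_{i-1}+\delta,\lambda_{j+1}-\delta]$ and nontrivial critical points $u_{n}\in H_{i-1}\oplus H_{j}^{\bot}$ of $f_{\lambda_{n}}$ with $f_{\lambda_{n}}(u_{n})\to 0$. Boundedness of $(u_{n})$ follows by rerunning the $(PS)_{c}$ computation with $\lambda$ replaced by $\lambda_{n}$ (which stays in a compact set): for $k\in(2,p)$, $(g_{4})$, $(g_{5})$ and Young's inequality give $(\tfrac{k}{2}-1)\|u_{n}\|^{2}\le kf_{\lambda_{n}}(u_{n})-f_{\lambda_{n}}'(u_{n})u_{n}+D_{\epsilon}=o(1)+D_{\epsilon}$, since $f_{\lambda_{n}}'(u_{n})u_{n}=0$. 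Hence, up to a subsequence, $\lambda_{n}\to\bar\lambda\in[\lambda_{i-1}+\delta,\lambda_{j+1}-\delta]$, $u_{n}\rightharpoonup u$ in $\HL$ and $u_{n}\to u$ in every $L^{r}(\Omega)$, $r<2N/(N-1)$; testing $f_{\lambda_{n}}'(u_{n})$ against $u_{n}-u$ and using \eqref{imcomp} exactly as in the $(PS)_{c}$ proof gives $u_{n}\to u$ strongly. Since $H_{i-1}\oplus H_{j}^{\bot}$ is closed, $u$ belongs to it, it is a critical point of $f_{\bar\lambda}$ there, and $f_{\bar\lambda}(u)=\lim f_{\lambda_{n}}(u_{n})=0$. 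Finally $2f_{\bar\lambda}(u)-f_{\bar\lambda}'(u)u=\int_{\Omega}[g(x,u)u-2G(x,u)]\,dx\ge(p-2)c_{1}\int_{\Omega}|u|^{p}\,dx$ by $(g_{4})$ and $(g_{5})$; as the left side vanishes, $\int_{\Omega}|u|^{p}\,dx=0$, i.e. the trace of $u$ is zero, and then $f_{\bar\lambda}(u)=\tfrac12\|u\|^{2}=0$ forces $u=0$. Thus $u_{n}\to 0$ in $\HL$, contradicting $\|u_{n}\|\ge r_{0}>0$, and the lemma follows.

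The main obstacle is the uniform lower bound of the second paragraph: a direct estimate obtained by testing against $u$ itself is useless because $\lambda$ may be arbitrarily large, and it is precisely the orthogonal splitting $H_{i-1}\oplus H_{j}^{\bot}$ --- through the two Poincar\'e-type inequalities of Propositions \ref{PD} and \ref{PDV} combined with the strict separation of $\lambda$ from the eigenvalues $\lambda_{i-1}$ and $\lambda_{j+1}$ --- that furnishes two one-sided coercive estimates whose sum produces the bound uniformly. Everything else only reuses the compactness and $(PS)_{c}$ machinery already in place.
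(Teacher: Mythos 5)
Your proof is correct, and it reaches the conclusion by a route that is organized differently from the paper's, even though the central mechanism is the same. Both arguments exploit the orthogonal splitting $u=u^{-}+u^{+}$ with $u^{-}\in H_{i-1}$, $u^{+}\in H_{j}^{\bot}$ together with Propositions \ref{PD} and \ref{PDV} and the gap $\delta$: your testing against $u^{+}$ and $u^{-}$ separately and adding is exactly the paper's choice of the test function $v_n-w_n$, which yields its inequality \eqref{5}. The divergence is in what is done with this estimate. The paper bounds the right-hand side by H\"older against $\|g(\cdot,u_n)\|_{L^{p/(p-1)}}$ to get \eqref{6}, derives $\int_\Omega G(x,u_n)\,dx\to 0$ from $f_{\lambda_n}(u_n)\to 0$ and $f'_{\lambda_n}(u_n)u_n=0$, and then splits into the cases $\|u_n\|\to\infty$ (excluded because $(g_2)$ and $(g_5)$ make $\|g(\cdot,u_n)\|_{L^{p/(p-1)}}$ bounded) and $\|u_n\|$ bounded (excluded by a ratio argument using $(g_3)$ near zero). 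You instead absorb the linear part of $g$ via $(g_2)$--$(g_3)$ to extract a $\lambda$-uniform a priori lower bound $\|u\|\ge r_0(\delta)>0$ on every nontrivial constrained critical point, prove boundedness of the contradicting sequence by the standard Ambrosetti--Rabinowitz computation rather than via \eqref{4}--\eqref{6}, and identify the strong limit as $0$ through the identity $0=2f_{\bar\lambda}(u)-f'_{\bar\lambda}(u)u\ge(p-2)c_1\int_\Omega|u|^p\,dx$. Your version buys a clean quantitative by-product (the radius $r_0$, uniform in $\lambda$) and avoids the dichotomy and the somewhat delicate ratio estimate of the paper's second case; the paper's version avoids having to rerun the Palais--Smale boundedness argument with varying $\lambda_n$. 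Both rest on the same hypotheses, and all the steps you outline (orthogonality of the $e_k$ in $\HL$ and of their traces in $L^2(\Omega)$, positivity of $\lambda$ coming from $i\ge2$, closedness of $H_{i-1}\oplus H_j^{\bot}$ under weak limits) check out.
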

\begin{proof}
Assume by contradiction that there exist $\delta >\nolinebreak0$, $\lambda_{n} \in [\lambda_{i-1}+\delta, \lambda_{j+1}-\delta]$ and $(u_{n})_n$ in $H_{i-1}\oplus H_{j}^{\bot}\backslash \{0\}$ such that
$$
f_{\lambda_{n}}(u_{n})\longrightarrow 0 \quad \mbox{ if }n\to \infty
$$
and such that for all $ z\in H_{i-1} \oplus H_{j}^{\bot}$, we have
\begin{equation}\label{3}
\int_{\mathcal{C}}{Du_{n}\cdot Dz\; dx dy}-\lambda_{n}\int_{\Omega}{u_{n} z\;dx}-\int_{\Omega}{g(x,u_{n})z\; dx}=0,
\end{equation}
where
\[
f_{\lambda_{n}}(u_{n})=\dfrac{1}{2} \int_{\mathcal{C}}{|Du_{n}|^2\; dx dy}-\dfrac{\lambda_{n}}{2}\int_{\Omega}{u_{n}^2\;dx}-\int_{\Omega}{G(x, u_{n})\;dx} .
\]
Up to a subsequence, we can assume that $\lambda_{n}\rightarrow \lambda$ in $[\lambda_{i-1}+\delta, \lambda_{j+1}-\delta]$. Choose $z=u_{n}$ in \eqref{3}. Then, by $(g_4)$ we obtain
$$\begin{aligned} 
0&=\int_{\mathcal{C}}{|Du_{n}|^2\; dx dy}-\lambda_{n}\int_{\Omega}{u_{n}^2\;dx}-\int_{\Omega}{g(x,u_{n})u_{n}\;dx}\\
&=2f_{\lambda_{n}}(u_{n})+\int_{\Omega}{[2G(x,u_{n})-g(x,u_{n})u_{n}]\;dx}\\ 
&\le2f_{\lambda_{n}}(u_{n})+(2-p)\int_{\Omega}{G(x,u_{n})\;dx}.
\end{aligned}$$
In particular, we deduce that
\begin{equation}\label{4}
\lim_{n \to \infty}{\int_{\Omega}{G(x,u_n)\; dx}}=0.
\end{equation}

Now, let $v_n$ in $H_{i-1}$ and $w_n$ in $H_{j}^{\bot}$ be such that $u_n=v_n+w_n$ for every $n\in \mathbb N$, and choose $z=v_n-w_n$ in \eqref{3}. Then, we have
$$\begin{aligned}
&\int_{\mathcal{C}}{|Dv_n|^2\; dx dy}-\lambda_n\int_{\Omega}{v_n^2\; dx}-\left(\int_{\mathcal{C}}{|Dw_n|^2\; dx dy}-\lambda_n\int_{\Omega}{w_n^2\;dx}\right)\\
&=\int_{\Omega}{g(x,u_n)(w_n-v_n)\; dx}\quad \forall \; n\in \mathbb N.
\end{aligned}$$
By \eqref{1} and \eqref{2}, we get the existence of $c=\delta/\lambda_{j+1}>0$ independent of $n$, $(c=\delta /{\lambda_{j+1}}$), such that 
\begin{equation}\label{5}
c\|u_n\|^2\le \int_{\Omega}{g(x,u_n)(v_n-w_n)\;dx}\quad \forall\; n\in \mathbb N.
\end{equation}
Here we used the fact that $v_n$ and $w_n$ are orthogonal, so that $\|u_n\|^2=\|v_n\|^2+\|w_n\|^2$ for every $n\in \N$.

Moreover, by H\"older's inequality, we get
\begin{equation}\label{g}\begin{aligned}
\left|\int_{\Omega}{g(x,u_n)(v_n-w_n)\; dx}\right| \le &\left(\int_{\Omega}{|g(x,u_n)|^{p/(p-1)}\; dx}\right)^{1-1/p}\\
&\cdot \left(\int_{\Omega}{|v_n-w_n|^p\; dx}\right)^{1/p}.
\end{aligned}\end{equation}
By the Sobolev embedding and by the compact embedding in \eqref{imcomp}, there exists an universal constant $\gamma_p>0$ such that
\begin{equation}\label{sopra}
\|v_n-w_n\|_{L^p(\Omega)}\le \gamma_p \|v_n-w_n\|=\gamma_p \|u_n\|.
\end{equation}
In this way, since $u_n \neq 0$, \eqref{5}, \eqref{g} and \eqref{sopra} imply that there exists $c'>0$ such that
\begin{equation}\label{6}
\|u_n\|\le c'\left(\int_{\Omega}{|g(x, u_n)|^{p/(p-1)}\; dx}\right)^{(p-1)/p} \quad \forall \;n\in \mathbb N.
\end{equation}

Up to a subsequence, there are two possibilities: either $\|u_n\| \to \infty$, or $\|u_n\|$ is bounded.

\textit{First case: $\|u_n\| \to \infty$.} Without loss of generality, we can suppose that there exists $u \in H_{i-1} \oplus H_j^{\bot}$ such that $u_n/\|u_n\| \rightharpoonup u$ in $\HL$ and $u_n/\|u_n\|\to u$ in $L^r(\Omega)$ for every $r\in \left[1,2N/(N-1)\right)$, see \eqref{imcomp}. 
First of all, \eqref{4}  implies that 
$$
0\leftarrow 2\dfrac{f_{\lambda_n}(u_n)}{\|u_n\|^2}\longrightarrow 1-\lambda \int_{\Omega}{u^2\;dx} \mbox{ as }n\to \infty,
$$
so that $u\not\equiv 0$. Moreover $(g_2)$ and $(g_5)$ imply that
$$\int_{\Omega}{|g(x, u_n)|^{p/(p-1)}\; dx}\le a_1'+a_2'\int_{\Omega}{G(x, u_n)\; dx}.$$
But the last quantity is bounded by \eqref{4}, while \eqref{6} leads to a contradiction.

\textit{Second case: $\|u_n\|$ is bounded.} As before, we can suppose that there exists $u \in H_{i-1} \oplus H_j^{\bot}$ such that $u_n \rightharpoonup u$ in $H_{i-1}\oplus H_j^\bot$ and $u_n \to u$ in $L^r(\Omega)$ for every $r\in \left[1,2N/(N-1)\right)$.
Moreover \eqref{4} and $(g_5)$ imply that $u=0$.

If $u_n \rightarrow 0$, then by \eqref{6} and $(g_3)$, we would have $$1\le \lim_{n\to \infty}{c'\dfrac{\left(\displaystyle\int_{\Omega}{|g(x, u_n)|^{p/(p-1)}\; dx}\right)^{(p-1)/p}}{\|u_n\|}}=0,$$
which is absurd. So, there should exist $\sigma >0$ such that $\|u_n\| \ge \sigma$ for all $n\in \mathbb N$; but also in this case, since $u_n \rightarrow 0$ in $L^p(\Omega)$, from \eqref{6} we would obtain 
$$\sigma \le \lim_{n\to \infty}{c'\left(\int_{\Omega}{|g(x, u_n)|^{p/(p-1)}\; dx}\right)^{(p-1)/p}}=0,$$
which is also absurd.
\end{proof}

For the following result we denote by $P: H_{0, L}^1(\mathcal{C})\longrightarrow \mbox{span}(e_i,\cdots, e_j)$ and $Q:H_{0, L}^1(\mathcal{C})\longrightarrow H_{i-1}\oplus H_i^\bot$ the orthogonal projections.
\begin{lemma}\label{Lemma 3}
Suppose there exist integers $i, j \ge 2$ such that $\lambda_{i-1}<\lambda_{i}=\cdots=\lambda_{j}<\lambda_{j+1}$. Let $\lambda \in \mathbb R$ and $(u_n)_n$ in $H_{0, L}^1(\mathcal{C})$ be such that $(f_\lambda(u_n))_n$ is bounded, $Pu_n\rightarrow 0$ and $Q\nabla f_{\lambda}\rightarrow 0$ as $n\to \infty$. Then $(u_n)_n$ is bounded.
\end{lemma}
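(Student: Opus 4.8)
The approach is an Ambrosetti--Rabinowitz balancing argument of exactly the type used to prove the $(PS)_c$ condition for $f_\lambda$, but adapted to the fact that we only know $Q\nabla f_{\lambda}(u_{n})\to 0$ (not the full gradient) and $Pu_{n}\to 0$. Write $u_{n}=p_{n}+q_{n}$ with $p_{n}=Pu_{n}\in\mathrm{span}(e_{i},\dots,e_{j})$ and $q_{n}=u_{n}-p_{n}=Qu_{n}$. Since $p_{n}$ and $q_{n}$ lie in orthogonal subspaces of $\HL$, and since their traces are (closures of) linear combinations of disjoint families of the orthonormal functions $\varphi_{k}$, they are also orthogonal in $L^{2}(\Omega)$; moreover $\mathrm{span}(e_{i},\dots,e_{j})$ is finite dimensional, so $\|p_{n}\|\to 0$ forces $\|p_{n}\|_{L^{2}(\Omega)}\to 0$, $\|p_{n}\|_{L^{p}(\Omega)}\to 0$ and $\|p_{n}\|_{L^{1}(\Omega)}\to 0$. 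Fix $k\in(2,p)$ and set $I_{n}:=kf_{\lambda}(u_{n})-\langle\nabla f_{\lambda}(u_{n}),q_{n}\rangle$. Because $(f_{\lambda}(u_{n}))_{n}$ is bounded and, since the $P$-component of $\nabla f_{\lambda}(u_{n})$ is $\HL$-orthogonal to $q_{n}$, $\langle\nabla f_{\lambda}(u_{n}),q_{n}\rangle=\langle Q\nabla f_{\lambda}(u_{n}),q_{n}\rangle$, we get $I_{n}\le M+\ve_{n}\|q_{n}\|$ for some $M>0$, where $\ve_{n}:=\|Q\nabla f_{\lambda}(u_{n})\|\to 0$.

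For the lower bound I expand $I_{n}$ using $\langle u_{n},q_{n}\rangle=\|q_{n}\|^{2}$, $\int_{\Omega}u_{n}q_{n}\,dx=\|q_{n}\|_{L^{2}(\Omega)}^{2}$, the Pythagoras identities $\|u_{n}\|^{2}=\|p_{n}\|^{2}+\|q_{n}\|^{2}$ and $\|u_{n}\|_{L^{2}(\Omega)}^{2}=\|p_{n}\|_{L^{2}(\Omega)}^{2}+\|q_{n}\|_{L^{2}(\Omega)}^{2}$, and $\int_{\Omega}g(x,u_{n})q_{n}\,dx=\int_{\Omega}g(x,u_{n})u_{n}\,dx-\int_{\Omega}g(x,u_{n})p_{n}\,dx$. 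Invoking $(g_{4})$ in the form $g(x,t)t-kG(x,t)\ge(p-k)G(x,t)$ and then $(g_{5})$, and noting that $\tfrac{k}{2}\|p_{n}\|^{2}-\tfrac{k\lambda}{2}\|p_{n}\|_{L^{2}(\Omega)}^{2}\to 0$, one obtains
\[
I_{n}\ \ge\ \Big(\tfrac{k}{2}-1\Big)\|q_{n}\|^{2}+(p-k)c_{1}\|u_{n}\|_{L^{p}(\Omega)}^{p}-|\lambda|\Big(\tfrac{k}{2}-1\Big)\|q_{n}\|_{L^{2}(\Omega)}^{2}-\Big|\int_{\Omega}g(x,u_{n})p_{n}\,dx\Big|+o(1).
\]

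It remains to absorb the two negative terms. By Young's inequality $t^{2}\le\ve|t|^{p}+C_{\ve}$ and $|q_{n}|^{p}\le 2^{p-1}(|u_{n}|^{p}+|p_{n}|^{p})$ we bound $\|q_{n}\|_{L^{2}(\Omega)}^{2}\le 2^{p-1}\ve\|u_{n}\|_{L^{p}(\Omega)}^{p}+C_{\ve}|\Omega|+o(1)$; by $(g_{2})$, $\|p_{n}\|_{L^{p}(\Omega)}\to 0$ and $\|u_{n}\|_{L^{p}(\Omega)}^{p-1}\le 1+\|u_{n}\|_{L^{p}(\Omega)}^{p}$ we bound $\big|\int_{\Omega}g(x,u_{n})p_{n}\,dx\big|\le a_{1}\|p_{n}\|_{L^{1}(\Omega)}+a_{2}\|p_{n}\|_{L^{p}(\Omega)}\|u_{n}\|_{L^{p}(\Omega)}^{p-1}\le o(1)+o(1)\|u_{n}\|_{L^{p}(\Omega)}^{p}$. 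Choosing $\ve$ small and $n$ large so that both are $\le\tfrac{1}{2}(p-k)c_{1}\|u_{n}\|_{L^{p}(\Omega)}^{p}$, we get $I_{n}\ge(\tfrac{k}{2}-1)\|q_{n}\|^{2}+\tfrac{1}{4}(p-k)c_{1}\|u_{n}\|_{L^{p}(\Omega)}^{p}-C\ge(\tfrac{k}{2}-1)\|q_{n}\|^{2}-C$ for $n$ large. Combining with $I_{n}\le M+\ve_{n}\|q_{n}\|$, since $\tfrac{k}{2}-1>0$ and $\ve_{n}\to 0$, a quadratic-versus-linear inequality forces $(\|q_{n}\|)_{n}$ to be bounded; then $\|u_{n}\|\le\|p_{n}\|+\|q_{n}\|$ is bounded, which is the claim.

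The main obstacle is precisely the third step: since we only have $Q\nabla f_{\lambda}(u_{n})\to 0$ (not the full Palais--Smale condition), the superlinear term $\int_{\Omega}g(x,u_{n})q_{n}\,dx$ cannot simply be controlled by $o(\|u_{n}\|)$ and must instead be reabsorbed into the coercive quantity $(p-k)c_{1}\|u_{n}\|_{L^{p}(\Omega)}^{p}$ produced by $(g_{4})$--$(g_{5})$, while every contribution coming from the removed finite-dimensional component $p_{n}$ has to be shown to be genuinely negligible. This is exactly where the hypothesis $Pu_{n}\to 0$ is indispensable: on the eigenspace one has $\tfrac{k}{2}\|p_{n}\|^{2}-\tfrac{k\lambda}{2}\|p_{n}\|_{L^{2}(\Omega)}^{2}=\tfrac{k}{2}(\lambda_{i}-\lambda)\|p_{n}\|_{L^{2}(\Omega)}^{2}$, which for $\lambda\ge\lambda_{i}$ has the wrong sign and is harmless only because it vanishes.
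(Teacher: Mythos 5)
Your proof is correct, but it follows a genuinely different route from the paper's. The paper argues by contradiction: assuming $\|u_n\|\to\infty$, it normalizes $w_n=u_n/\|u_n\|$, tests the gradient against $u_n$ itself to produce the identity \eqref{7}, and then plays two incompatible asymptotics of $\int_\Omega G(x,u_n)\,dx/\|u_n\|^2$ against each other (it must tend to $1/2$ by the boundedness of $f_\lambda(u_n)$ and the vanishing of the weak limit, yet to $0$ by $(g_4)$ and \eqref{10}), which requires the compact embedding \eqref{imcomp} and a weak-convergence step. You instead run the Ambrosetti--Rabinowitz balancing argument directly, testing $\nabla f_\lambda(u_n)$ only against the component $q_n=Qu_n$ --- which is legitimate precisely because $P\nabla f_\lambda(u_n)\perp q_n$, so only the controlled part $Q\nabla f_\lambda(u_n)$ enters --- and showing that every contribution of the finite-dimensional component $p_n=Pu_n$ (in $\|\cdot\|$, $L^1$, $L^2$, $L^p$, via norm equivalence) is negligible, while the term $\int_\Omega g(x,u_n)q_n\,dx$ is reabsorbed into the coercive quantity $(p-k)c_1\|u_n\|_{L^p(\Omega)}^p$ coming from $(g_4)$--$(g_5)$. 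Your quadratic-versus-linear inequality in $\|q_n\|$ then closes the argument. What this buys: the proof is direct rather than by contradiction, needs no weak convergence or compactness, yields an explicit a priori bound on $\|u_n\|$ in terms of $\sup_n|f_\lambda(u_n)|$, $\sup_n\|Q\nabla f_\lambda(u_n)\|$ and $\sup_n\|Pu_n\|$, and in fact would survive under the weaker hypothesis that $(Pu_n)_n$ is merely bounded (since $\|p_n\|_{L^p}\|u_n\|_{L^p}^{p-1}$ can still be absorbed via Young's inequality); the paper's argument, by contrast, is closer in spirit to the other limiting lemmas of Section 3 and reuses identity \eqref{7}, which is needed again elsewhere. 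One cosmetic slip: if each of the two negative terms is only required to be at most $\tfrac12(p-k)c_1\|u_n\|_{L^p(\Omega)}^p$ plus a constant, the remaining coefficient of $\|u_n\|_{L^p(\Omega)}^p$ is $0$, not $\tfrac14(p-k)c_1$; take each to be at most $\tfrac14(p-k)c_1\|u_n\|_{L^p(\Omega)}^p$ plus a constant, or simply discard the $L^p$ term as you do in the final bound $I_n\ge(\tfrac k2-1)\|q_n\|^2-C$, which is all that is used.
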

\begin{proof}
Assume by contradiction that $(u_n)_n$ in unbounded. Then we can suppose that there exists $u$ in $H_{0, L}^1(\mathcal{C})$ such that $u_n/\|u_n\|\rightharpoonup u$ in $H_{0, L}^1(\mathcal{C})$ and $u_n/\|u_n\|\to u$ in $L^r(\Omega)$ for every $r\in \left[1,2N/(N-1)\right)$ as $n\to \infty$.

Note that $u_n=Pu_n+Qu_n$, $Pu_n\rightarrow 0$ and $Q\nabla f_{\lambda}(u_n)\rightarrow 0$, where $\nabla f_{\lambda}(u_n)=V_n$ is such that $$f_{\lambda}'(u_n)v=\int_{\mathcal{C}}{V_n \cdot Dv\;dx dy} \quad \forall \; v\in H_{0, L}^1(\mathcal{C}).$$ 
So we obtain $$V_n=u_n-K(\lambda u_n+g(x, u_n)),$$ where $K: L^2(\Omega)\longrightarrow H_{0, L}^1(\mathcal{C})$ is the operator defined by $K(u)=v$, where $v$ is the weak solution $v$ of the problem
\begin{equation}\label {K}
\begin{cases}
\Delta v=0 & \mbox{in}\; \mathcal{C},\\
v=0 & \mbox{on}\; \partial_L\mathcal{C},\\
\dfrac{\partial v}{\partial\nu}=u & \mbox{in}\; \Omega \times \{0\}.
\end{cases}
\end{equation}

In particular, we have
\[
\begin{aligned}
&\langle Q\nabla f_{\lambda}(u_n),u_n\rangle=\langle \nabla f_{\lambda}(u_n),u_n\rangle-\langle P\nabla f_{\lambda}(u_n),u_n\rangle \\
&=\|u_n\|^2-\lambda \int_{\Omega}{u_n^2\; dx}-\int_{\Omega}{g(x,u_n)u_n\;dx}\\
&-\int_{\mathcal{C}}{D(P(u_n-K(\lambda u_n+g(x,u_n))))\cdot Du_n\; dx dy}.
\end{aligned}
\]
But for every $z\in H_{0, L}^1(\mathcal{C})$, $Pz$ is a smooth function and $DPu_n=PDu_n$, because $u\in$ span$(e_i,\cdots,e_j)$ and $Pz \bot Qz$. In this way the last integral in the previous equation is equal to
$$\int_{\mathcal{C}}{|DPu_n|^2\; dx dy}-\lambda \int_{\Omega}{|Pu_n|^2\; dx}-\int_{\Omega}{g(x,u_n)Pu_n\;dx}.$$
As a consequence, we have
\begin{equation}\label{7}\begin{aligned}
&\langle Q\nabla f_{\lambda}(u_n),u_n\rangle=2f_{\lambda}(u_n)+2\int_{\Omega}{G(x,u_n)\; dx}\\
&-\int_{\Omega}{g(x,u_n)u_n\; dx}\\
&-\|Pu_n\|^2+\lambda \int_{\Omega}{|Pu_n|^2\; dx}+\int_{\Omega}{g(x, u_n)Pu_n\; dx}.
\end{aligned}\end{equation}

Now, observe that $(g_2)$ implies that $$\lim_{n\to \infty}{\dfrac{\displaystyle \int_{\Omega}{|g(x,u_n)Pu_n|\;dx}}{\|u_n\|^{p-1}}=0},$$
since $$|g(x,u_n)Pu_n|\le \|Pu_n\|_{\infty}(a_1+a_2|u_n|^{p-1})$$ and $\|Pu_n\|_{\infty}\rightarrow 0$ as $n\to \infty$ by assumption.
So, starting from \eqref{7}, using $(g_4)$ and dividing by $\|u_n\|^{p-1}$ for $p-1>1$, we get $$\lim_{n\to \infty}{\dfrac{\displaystyle \int_{\Omega}{G(x,u_n)\;dx}}{\|u_n\|^{p-1}}=0}.$$
Moreover, $(g_5)$ implies that $$\lim_{n\to \infty}{\dfrac{\displaystyle \int_{\Omega}{|u_n|^p\;dx}}{\|u_n\|^{p-1}}=0},$$ and so $u \equiv 0$.
Now, dividing $2f_{\lambda}(u_n)$ by $\|u_n\|^2$, we have
\begin{equation}\label{8}
\lim_{n\to \infty}{\dfrac{\displaystyle\int_{\Omega}{G(x,u_n)\;dx}}{\|u_n\|^2}}=\dfrac{1}{2}
\end{equation}
and so, by $(g_5)$, there exists a constant $c>0$ such that
\begin{equation}\label{9}
\int_{\Omega}{|u_n|^p\; dx}\le c\|u_n\|^2.
\end{equation}
Now, let us show that
\begin{equation}\label{10}
\lim_{n\to \infty}{\dfrac{\displaystyle\int_{\Omega}{|g(x,u_n)Pu_n|\;dx}}{\|u_n\|^2}}=0.
\end{equation}
Indeed, $(g_2)$ and H\"{o}lder's inequality imply that
\[
\begin{aligned}
&\lim_{n\to \infty}{\dfrac{\displaystyle\int_{\Omega}{|g(x,u_n)Pu_n|\;dx}}{\|u_n\|^2}}\\& \le \dfrac{\|Pu_n\|_{\infty}}{\|u_n\|^2}\left(a_1+a_2\int_{\Omega}{|u_n|^{p-1}\;dx}\right)\\
&\le\|Pu_n\|_{\infty}\left[\dfrac{a_1}{\|u_n\|^2}+\dfrac{a_2'}{\|u_n\|^{2/p}}\left(\dfrac{\int_{\Omega}{|u_n|^p\; dx}}{\|u_n\|^2}\right)^{(p-1)/p}\right],
\end{aligned}
\]
and  equality \eqref{10} follows from \eqref{9} and the fact that $p>2$.

In this way \eqref{7}, $(g_4)$ and \eqref{10} imply that
$$\lim_{n\to \infty}{\dfrac{\displaystyle\int_{\Omega}{G(x,u_n)\;dx}}{\|u_n\|^2}}=0,$$
which contradicts \eqref{8}.
\end{proof}

Now, by Lemma~\ref{Lemma 2} and Lemma~\ref{Lemma 3}, we can prove the following result:
\begin{prop}\label{prop 1}
Suppose there exist integers $i, j \ge 2$ such that $\lambda_{i-1}<\lambda_{i}=\cdots=\lambda_{j}<\lambda_{j+1}$. Then, for all $\delta >0$ there exists $\epsilon_0>0$ such that $\forall$ $\lambda \in [\lambda_{i-1}+\delta, \lambda_{j+1}-\delta]$ and $\forall$ $\epsilon', \epsilon''\in (0,\epsilon_0)$ with  $\epsilon'<\epsilon''$, condition $(\nabla)(f_\lambda,H_{i-1}\oplus H_j^\bot, \epsilon',\epsilon'')$ holds (see the Appendix).
\end{prop}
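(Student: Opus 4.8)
The plan is to argue by contradiction, turning the failure of $(\nabla)$ into a sequence to which Lemmas~\ref{Lemma 2} and \ref{Lemma 3} apply directly. Fix $\delta>0$ and let $\epsilon_0>0$ be the number supplied by Lemma~\ref{Lemma 2} for this $\delta$; I claim the same $\epsilon_0$ works here. Suppose not: then there are $\lambda\in[\lambda_{i-1}+\delta,\lambda_{j+1}-\delta]$ and $0<\epsilon'<\epsilon''<\epsilon_0$ for which $(\nabla)(f_\lambda,H_{i-1}\oplus H_j^\bot,\epsilon',\epsilon'')$ fails. Unwinding the definition of $(\nabla)$ recalled in the Appendix (taking the parameter equal to $1/n$), this produces a sequence $(u_n)_n\subset\HL$ with
\[
\epsilon'\le f_\lambda(u_n)\le\epsilon'',\qquad \|Pu_n\|\to 0,\qquad \|Q\nabla f_\lambda(u_n)\|\to 0\quad\text{as }n\to\infty,
\]
where $Q$ is the orthogonal projection onto $H_{i-1}\oplus H_j^\bot$ and $P=\mathrm{Id}-Q$ is the projection onto $\mathrm{span}(e_i,\dots,e_j)$.

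Since $(f_\lambda(u_n))_n$ is bounded, $Pu_n\to 0$ and $Q\nabla f_\lambda(u_n)\to 0$, Lemma~\ref{Lemma 3} gives at once that $(u_n)_n$ is bounded in $\HL$. Hence, along a subsequence, $u_n\rightharpoonup u$ in $\HL$ and, by the compact embedding \eqref{imcomp}, $u_n\to u$ in $L^r(\Omega)$ for every $r\in[1,2N/(N-1))$, in particular in $L^2(\Omega)$ and in $L^p(\Omega)$. Moreover $\|Pu_n\|\to 0$ together with the weak convergence force $Pu=0$, i.e. $u\in H_{i-1}\oplus H_j^\bot$.

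Next I would upgrade this to strong convergence. Evaluating $\langle\nabla f_\lambda(u_n),u_n-u\rangle$ in two ways: writing $\nabla f_\lambda(u_n)=Q\nabla f_\lambda(u_n)+P\nabla f_\lambda(u_n)$, the $Q$-part contributes $o(1)$ because $Q\nabla f_\lambda(u_n)\to 0$ and $(u_n-u)_n$ is bounded, while
\[
\langle P\nabla f_\lambda(u_n),u_n-u\rangle=\langle\nabla f_\lambda(u_n),P(u_n-u)\rangle=\langle\nabla f_\lambda(u_n),Pu_n\rangle\to 0,
\]
since $(\nabla f_\lambda(u_n))_n$ is bounded in $\HL$ (boundedness of $(u_n)_n$, the embedding \eqref{cont} and the growth $(g_2)$) and $Pu_n\to 0$. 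On the other hand,
\[
\langle\nabla f_\lambda(u_n),u_n-u\rangle=\|u_n\|^2-\langle u_n,u\rangle-\lambda\int_\Omega u_n(u_n-u)\,dx-\int_\Omega g(x,u_n)(u_n-u)\,dx,
\]
and the last two integrals tend to $0$ by the $L^2$- and $L^p$-convergence, $(g_2)$ and H\"older's inequality. Combining, $\|u_n\|^2-\langle u_n,u\rangle\to 0$; since $\langle u_n,u\rangle\to\|u\|^2$, we get $\|u_n\|\to\|u\|$, hence $u_n\to u$ strongly in $\HL$.

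Now every term passes to the limit: since $f_\lambda\in C^1$, $f_\lambda(u)=\lim_n f_\lambda(u_n)\in[\epsilon',\epsilon'']$ and $Q\nabla f_\lambda(u)=\lim_n Q\nabla f_\lambda(u_n)=0$, so $u$ is a critical point of $f_\lambda$ lying in $H_{i-1}\oplus H_j^\bot$ with $f_\lambda(u)\in[\epsilon',\epsilon'']\subset[-\epsilon_0,\epsilon_0]$. By Lemma~\ref{Lemma 2} this forces $u=0$, whence $f_\lambda(u)=f_\lambda(0)=0$ by Remark~\ref{Osservazione 1}, contradicting $f_\lambda(u)\ge\epsilon'>0$; this contradiction establishes $(\nabla)(f_\lambda,H_{i-1}\oplus H_j^\bot,\epsilon',\epsilon'')$. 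The genuine analytic content is already packaged in Lemmas~\ref{Lemma 2} and \ref{Lemma 3}, which I would invoke as black boxes; within the present argument the only delicate point is the weak-to-strong passage, where one must exploit that the non-compact component $P\nabla f_\lambda(u_n)$ — which is \emph{not} itself small, living on the finite-dimensional block $\mathrm{span}(e_i,\dots,e_j)$ — is always paired against $Pu_n\to 0$, combined with the boundedness furnished by Lemma~\ref{Lemma 3}.
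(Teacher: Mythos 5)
Your proof is correct, and its overall skeleton coincides with the paper's: argue by contradiction, take $\epsilon_0$ from Lemma~\ref{Lemma 2}, extract a sequence violating $(\nabla)$, invoke Lemma~\ref{Lemma 3} for boundedness, pass to a weak limit $u\in H_{i-1}\oplus H_j^\bot$, upgrade to strong convergence, and conclude via Lemma~\ref{Lemma 2} that $u=0$, contradicting $f_\lambda(u)\ge\epsilon'>0=f_\lambda(0)$. The one place where you genuinely diverge is the weak-to-strong step. The paper writes $\nabla f_\lambda(u_n)=u_n-K(\lambda u_n+g(\cdot,u_n))$ and deduces strong convergence from the compactness of the harmonic-extension operator $K$, which is the sole purpose of Lemma~\ref{Kcomp}; you instead run the standard $(S_+)$-type computation, pairing $\nabla f_\lambda(u_n)$ with $u_n-u$, splitting off the $Q$-component (which is $o(1)$ by hypothesis) and the $P$-component (which is controlled because the self-adjointness of $P$ lets you throw it onto $P(u_n-u)=Pu_n\to0$), and then using the compact embeddings \eqref{imcomp} together with $(g_2)$ to kill the lower-order integrals and get $\|u_n\|\to\|u\|$. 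Both mechanisms encode the same compactness, but your route bypasses Lemma~\ref{Kcomp} entirely and is slightly more self-contained, at the cost of the small bookkeeping needed to check that $(\nabla f_\lambda(u_n))_n$ is bounded and that the $P$-part really pairs against something vanishing; the paper's route packages the compactness once and for all in $K$, which it then reuses implicitly elsewhere (e.g.\ in Lemma~\ref{Lemma 3}). One cosmetic point: the condition $(\nabla)$ is stated with $d(u_n,M)\to0$ rather than $\|Pu_n\|\to0$, so you should note explicitly that these coincide because $Q$ is the orthogonal projection onto $M=H_{i-1}\oplus H_j^\bot$ and hence $d(u_n,M)=\|u_n-Qu_n\|=\|Pu_n\|$.
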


Before proving Proposition \ref{prop 1}, we will give a property of the operator $K$ definited in \eqref{K}:
\begin{lemma}\label{Kcomp}
The operator $K: L^2(\Omega)\longrightarrow H_{0, L}^1(\mathcal{C})$ is compact.
\end{lemma}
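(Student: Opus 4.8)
The plan is to show that $K$ factors as the composition of a bounded linear map $L^2(\Omega)\to H_{0,L}^1(\mathcal C)$ with a compact embedding, or more directly, to exploit the explicit spectral representation of $K$ together with the compact embedding \eqref{imcomp}. First I would recall that, by \cite[Proposition 2.2]{ct} and the spectral decomposition, if $u=\sum_{k}\alpha_k\varphi_k\in L^2(\Omega)$, then the weak solution $v=K(u)$ of \eqref{K} is the harmonic extension whose Neumann datum is $u$; concretely $v(x,y)=\sum_k \frac{\alpha_k}{\lambda_k}\varphi_k(x)e^{-\lambda_k y}$, so that $\|K(u)\|^2=\int_{\mathcal C}|Dv|^2\,dxdy=\sum_k \frac{\alpha_k^2}{\lambda_k}$. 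Since $\lambda_k\to\infty$, this already shows $K$ is bounded from $L^2(\Omega)$ into $H_{0,L}^1(\mathcal C)$, and in fact smoothing: the $H_{0,L}^1$-norm of $K(u)$ controls $\sum_k \alpha_k^2/\lambda_k$, which is a weaker quantity than $\|u\|_{L^2}^2=\sum_k\alpha_k^2$.

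Next I would prove compactness directly. Let $(u_n)_n$ be bounded in $L^2(\Omega)$, say $\|u_n\|_{L^2}\le M$, and write $u_n=\sum_k \alpha_k^{(n)}\varphi_k$ with $\sum_k (\alpha_k^{(n)})^2\le M^2$. Passing to a subsequence, $\alpha_k^{(n)}\to\alpha_k$ for each $k$ (diagonal argument), and $u:=\sum_k\alpha_k\varphi_k\in L^2(\Omega)$ with $u_n\rightharpoonup u$ weakly in $L^2$. I claim $K(u_n)\to K(u)$ strongly in $H_{0,L}^1(\mathcal C)$. Indeed,
\[
\|K(u_n)-K(u)\|^2=\sum_{k=1}^\infty \frac{(\alpha_k^{(n)}-\alpha_k)^2}{\lambda_k}
=\sum_{k=1}^{N_0}\frac{(\alpha_k^{(n)}-\alpha_k)^2}{\lambda_k}+\sum_{k>N_0}\frac{(\alpha_k^{(n)}-\alpha_k)^2}{\lambda_k}.
\]
The tail is bounded by $\frac{1}{\lambda_{N_0+1}}\sum_{k>N_0}(\alpha_k^{(n)}-\alpha_k)^2\le \frac{(2M)^2}{\lambda_{N_0+1}}$, which can be made arbitrarily small uniformly in $n$ by choosing $N_0$ large, since $\lambda_{N_0+1}\to\infty$; and the finite head tends to $0$ as $n\to\infty$ for fixed $N_0$ by the componentwise convergence. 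Hence $K(u_n)\to K(u)$ in $H_{0,L}^1(\mathcal C)$, proving $K$ is compact.

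An alternative route, which I would mention, avoids the explicit series: $K$ maps $u$ to $v$, and testing the weak formulation of \eqref{K} with $v$ gives $\|v\|^2=\int_\Omega u\,(tr_\Omega v)\,dx\le \|u\|_{L^2(\Omega)}\|tr_\Omega v\|_{L^2(\Omega)}\le C\|u\|_{L^2(\Omega)}\|v\|$ by the continuous trace embedding \eqref{cont}, so $\|v\|\le C\|u\|_{L^2(\Omega)}$ and $K$ is bounded. For compactness, take $u_n\rightharpoonup u$ in $L^2(\Omega)$; then $v_n=K(u_n)$ is bounded in $H_{0,L}^1(\mathcal C)$, so along a subsequence $v_n\rightharpoonup v$ in $H_{0,L}^1(\mathcal C)$ and, by the \emph{compact} embedding \eqref{imcomp}, $tr_\Omega v_n\to tr_\Omega v$ in $L^2(\Omega)$. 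Writing the weak formulations for $v_n$ and $v$ and subtracting, then testing with $v_n-v$, one gets $\|v_n-v\|^2=\int_\Omega(u_n-u)\,tr_\Omega(v_n-v)\,dx+\int_\Omega u\,tr_\Omega(v_n-v)\,dx$; the second term vanishes since $tr_\Omega(v_n-v)\rightharpoonup 0$ in $L^2$, and the first is bounded by $2M\,\|tr_\Omega(v_n-v)\|_{L^2(\Omega)}\to 0$. Hence $v_n\to v$ strongly and $K$ is compact.

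The only mildly delicate point is bookkeeping: making sure the weak formulation of \eqref{K} is correctly written (the Neumann condition means $\langle K(u),w\rangle=\int_\Omega u\,tr_\Omega w\,dx$ for all $w\in H_{0,L}^1(\mathcal C)$) and invoking the \emph{compactness} of the trace embedding \eqref{imcomp} rather than merely its continuity — that is what upgrades weak convergence of $v_n$ to strong convergence. Everything else is routine. I expect the spectral-series argument to be the cleanest to write, but the trace-map argument is the more robust and I would present that one as the main proof.
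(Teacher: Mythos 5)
Your second (trace--map) argument is essentially the paper's own proof: the paper likewise tests the equation for $v_n=K(u_n)$ with $v_n$ to get boundedness of $(v_n)_n$, extracts $v_n\rightharpoonup v$ in $H_{0,L}^1(\mathcal{C})$, uses the compact embedding \eqref{imcomp} to get $tr_\Omega v_n\to tr_\Omega v$ in $L^2(\Omega)$, and then passes to the limit in $\int_{\mathcal{C}}|Dv_n|^2\,dxdy-\int_{\mathcal{C}}Dv_n\cdot Dv\,dxdy-\int_\Omega u_n(v_n-v)\,dx=0$ to conclude $\|v_n\|\to\|v\|$ and hence strong convergence; your version estimates $\|v_n-v\|^2$ directly, but the mechanism is identical. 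One small bookkeeping slip: once you identify the weak limit $v$ with $K(u)$ (which requires passing to the limit in the weak formulation, using $u_n\rightharpoonup u$ in $L^2(\Omega)$), subtraction gives $\|v_n-v\|^2=\int_\Omega(u_n-u)\,tr_\Omega(v_n-v)\,dx$ with \emph{no} extra term $\int_\Omega u\,tr_\Omega(v_n-v)\,dx$; since that extra term tends to $0$ anyway, nothing is lost. Your first (spectral) argument is a genuinely different and equally valid route: from $K\varphi_k=\lambda_k^{-1}e_k$ and $\|e_k\|^2=\lambda_k$ one gets $\|K(u)\|^2=\sum_k\alpha_k^2/\lambda_k$, and the uniform tail estimate using $\lambda_k\to\infty$ is the classical proof that a diagonal operator with decaying eigenvalues is compact. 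That route buys an explicit smoothing rate for $K$, at the price of leaning on the spectral description of $H_{0,L}^1(\mathcal{C})$ from \cite{ct}, whereas the paper's argument needs only the compactness of the trace embedding.
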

\begin{proof}
Let $(u_n)_n\subset L^2(\Omega)$ be a bounded sequence and set $v_n:=K(u_n)$ for all $n\in \N$. Since, by \eqref{K}, we have
\[
\|v_n\|^2\leq \|u\|_{L^2(\Omega)}\|v\|_{L^2(\Omega)},
\]
we immediately get that $(v_n)_n$ is bounded. Thus, we can suppose that there exists $v\in \HL$ such that $v_n\rightharpoonup v$ in $H_{0, L}^1(\mathcal{C})$ and $v_n\to v$ in $L^2(\Omega)$ as $n\to \infty$.
 In this way, since $$
 \int_{\mathcal{C}}{|Dv_n|^2\; dx dy}-\int_{\mathcal{C}}{Dv_n\cdot D v\; dx}-\int_{\Omega}{u_n(v_n-v)\; dx}=0$$
for all $n\in\mathbb N$, exploiting the previous convergences and recalling that $(u_n)_n$ is bounded in $L^2(\Omega)$, we immediately have that
$$
\|v_n\|^2\rightarrow \|v\|^2 \mbox{ as }n\to \infty,$$
that is, $v_n\rightarrow v$ in $H_{0, L}^1(\mathcal{C})$ as $n\to \infty$.
\end{proof}

\begin{proof}[Proof of Proposition $\ref{prop 1}$]
Assume by contradiction that there exists $\delta>0$ such that for all  $\epsilon_0>0$ there exist $\lambda\in [\lambda_{i-1}+\delta, \lambda_{j+1}-\delta]$ and $\epsilon', \epsilon'' \in (0,\epsilon_0)$, such that the condition $(\nabla)(f_\lambda,H_{i-1}\oplus H_j^\bot, \epsilon',\epsilon'')$ does not hold.

Take $\epsilon_0>0$ as given by Lemma \ref{Lemma 2}, and take a sequence $(u_n)_n$ in $H_{0, L}^1(\mathcal{C})$ such that  $f_\lambda (u_n)\in [\epsilon', \epsilon'']$ for every $n\in \N$, $d(u_n, H_{i-1}\oplus H_j^\bot)\rightarrow 0$ and $Q\nabla f_\lambda (u_n)\rightarrow 0$ as $n\to \infty$, i.e. $(u_n)$ is a sequence which makes $(\nabla)(f_\lambda,H_{i-1}\oplus H_j^\bot, \epsilon',\epsilon'')$ false.

Then, by Lemma \ref{Lemma 3}, we get that $(u_n)_n$ is bounded and we can assume that $u_n\rightharpoonup u$ in $H_{0, L}^1(\mathcal{C})$ and $u_n\to u$ in $L^r(\Omega)$ for all $r\in \left[1,2N/(N-1)\right)$ as $n\to \infty$.

Now, 
\[
Q\nabla f_\lambda (u_n)=u_n-Pu_n-K(\lambda u_n+g(x,u_n)),
\]
and we know that $g(x,u_n)\rightarrow g(x,u)$ in $L^{p/(p-1)}(\Omega)$ as $n\to \infty$ by $(g_2)$. Moreover, $Q\nabla f_\lambda (u_n)\rightarrow 0$ and $Pu_n\to 0$ as $n\to \infty$. Thus,  by Lemma \ref{Kcomp}, we find that $u_n\rightarrow u$ in $H_{i-1}\oplus H_j^\bot$ and $u$ is a critical point of $f_\lambda$ on $H_{i-1}\oplus H_j^\bot$. Therefore, by Lemma \ref{Lemma 2}, we know that $u=0$, while $0<\epsilon'\le f_\lambda(u_n)$ for all $n\in \mathbb N$, which contradicts the fact that $f_\lambda$ is continuous. Hence, the claim follows.
\end{proof}

In order to apply the $\nabla$-Theorem (see the Appendix), at this point we have only to show that $\sup f_\lambda (\overline{B_{j}(R)})$ is small enough. In order to do that, we need the following Lemma.
\begin{lemma}\label{Lemma 4}
We have $$\lim_{\lambda \rightarrow \lambda_j}{\sup f_\lambda (H_j)}=0.$$
\end{lemma}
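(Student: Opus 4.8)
The plan is to bound $f_\lambda$ from above on the finite-dimensional space $H_j$ by an elementary one-variable function of $\|u\|$ whose maximum over $[0,\infty)$ tends to $0$ as $\lambda\to\lambda_j$, and to use $f_\lambda(0)=0$ for the matching lower bound $\sup f_\lambda(H_j)\ge 0$.

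First I would fix $\lambda$ in a neighbourhood of $\lambda_j$, so that in particular $\lambda>0$ (recall $\lambda_j\ge\lambda_1>0$). For $u\in H_j$, Proposition~\ref{PD}, i.e. inequality~\eqref{1}, gives $\int_\Omega u^2\,dx\ge \|u\|^2/\lambda_j$, whence
\[
\frac12\|u\|^2-\frac{\lambda}{2}\int_\Omega u^2\,dx\le \frac12\Big(1-\frac{\lambda}{\lambda_j}\Big)\|u\|^2 .
\]
Combining this with $(g_5)$, which yields $\int_\Omega G(x,u)\,dx\ge c_1\int_\Omega|u|^p\,dx$, I get
\[
f_\lambda(u)\le \frac12\Big(1-\frac{\lambda}{\lambda_j}\Big)\|u\|^2-c_1\int_\Omega|u|^p\,dx .
\]
Next, since $H_j$ is finite-dimensional and the trace map $v\mapsto tr_\Omega v$ is injective on $H_j$ (the eigenfunctions $\varphi_k$ being linearly independent), $u\mapsto\|tr_\Omega u\|_{L^p(\Omega)}$ is a norm on $H_j$, equivalent to $\|\cdot\|$; hence there is a constant $c=c(j)>0$ with $\int_\Omega|u|^p\,dx\ge c\|u\|^p$ for all $u\in H_j$. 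Therefore $f_\lambda(u)\le \phi_\lambda(\|u\|)$, where $\phi_\lambda(t)=\tfrac12\big(1-\tfrac{\lambda}{\lambda_j}\big)t^2-c_1 c\,t^p$ for $t\ge 0$.

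The remaining step is the elementary maximization of $\phi_\lambda$: when $\lambda<\lambda_j$ a direct computation shows that $\phi_\lambda$ attains its maximum over $[0,\infty)$ at $t_\lambda=\big((1-\lambda/\lambda_j)/(pc_1c)\big)^{1/(p-2)}$, with
\[
\max_{t\ge0}\phi_\lambda(t)=\frac{p-2}{2p}\,(pc_1c)^{-2/(p-2)}\Big(1-\frac{\lambda}{\lambda_j}\Big)^{p/(p-2)} ,
\]
while for $\lambda\ge\lambda_j$ one has $\phi_\lambda\le 0$ on $[0,\infty)$. Since $p>2$, the exponent $p/(p-2)$ is positive, so the right-hand side above tends to $0$ as $\lambda\to\lambda_j^-$. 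On the other hand $\sup f_\lambda(H_j)\ge f_\lambda(0)=0$ by Remark~\ref{Osservazione 1}. Putting these together,
\[
0\le \sup f_\lambda(H_j)\le \max_{t\ge0}\phi_\lambda(t)\xrightarrow[\lambda\to\lambda_j]{}0 ,
\]
which is the assertion.

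I do not expect a genuine obstacle here: the statement is essentially a coercivity-type estimate made quantitative in $\lambda$. The only mildly delicate point is the uniformity in $u$ of the constant $c$ in $\int_\Omega|u|^p\,dx\ge c\|u\|^p$, but this is harmless because the index $j$ (hence $H_j$) is fixed throughout the limit; everything else reduces to the single-variable inequality for $\phi_\lambda$.
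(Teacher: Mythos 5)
Your proof is correct, and it takes a genuinely more direct route than the paper's. The core inequality---$f_\lambda(u)\le \tfrac12\bigl(1-\lambda/\lambda_j\bigr)\|u\|^2-c_1c\|u\|^p$ on $H_j$, obtained from \eqref{1} (valid since $\lambda>0$ near $\lambda_j$), from $(g_5)$, and from the equivalence of norms on the finite-dimensional space $H_j$---is exactly the one the paper uses (its inequality \eqref{infhi}). The difference is what is done with it: the paper argues by contradiction, taking $\mu_n\to\lambda_j$ and maximizers $u_n$ of $f_{\mu_n}$ on $H_j$ with $f_{\mu_n}(u_n)\ge\epsilon$, then splitting into the cases where $(u_n)_n$ is bounded (contradiction by passing to the limit and using $f_{\lambda_j}\le 0$ on $H_j$) and where $\|u_n\|\to\infty$ (contradiction by coercivity); this also requires justifying that the supremum is attained. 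You instead maximize the explicit one-variable majorant $\phi_\lambda(t)=\tfrac12\bigl(1-\lambda/\lambda_j\bigr)t^2-c_1ct^p$ directly, together with the trivial lower bound $\sup f_\lambda(H_j)\ge f_\lambda(0)=0$ from Remark~\ref{Osservazione 1}. This avoids any compactness or attainment argument, handles $\lambda\ge\lambda_j$ for free (there $\phi_\lambda\le0$, so $\sup f_\lambda(H_j)=0$), and yields as a bonus the quantitative rate $\sup f_\lambda(H_j)=O\bigl((1-\lambda/\lambda_j)^{p/(p-2)}\bigr)$ as $\lambda\to\lambda_j^-$. Your justification of the lower bound $\int_\Omega|u|^p\,dx\ge c\|u\|^p$ on $H_j$, via the injectivity of the trace on $\mathrm{span}(e_1,\dots,e_j)$ so that $u\mapsto\|tr_\Omega u\|_{L^p(\Omega)}$ is a norm on this finite-dimensional space, is also cleaner than the paper's appeal to ``the Sobolev inequality,'' which by itself only gives the reverse estimate.
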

\begin{proof}
Assume by contradiction that there exist $\epsilon >0$ and sequences $\mu_n \rightarrow \lambda_j$, $(u_n)_n$ in $H_j$, such that $$\sup f_{\mu_n} (H_j)=f_{\mu_n}(u_n)\ge \epsilon \quad \forall \; n\in \mathbb N.$$
Note that $(u_n)_n$ is well defined, since $f_{\mu_n}$ attains a maximum in $H_j$ thanks to condition $(g_5)$.

If $(u_n)_n$ is bounded, we can assume that $u_n\rightarrow u$ in $H_j$. In this way, by continuity, we have $$\epsilon \le \dfrac{1}{2}\int_\mathcal{C} {|Du|^2\; dx dy}-\dfrac{\lambda_j}{2}\int_{\Omega}{u^2\; dx}-\int_{\Omega}{G(x,u)\; dx} \le 0$$
by \eqref{1} and $(g_5)$, and a contradiction arises.
So we can assume that $\|u_n\|\rightarrow \infty$. 

In this case, by $(g_5)$ and the Sobolev inequality, there exists $\gamma_p>0$ such that
\begin{equation}\label{infhi}
0<\epsilon \le f_{\mu_n}(u_n) \le \dfrac{1}{2} \|u_n\|^2-\dfrac{\mu_n}{2\lambda_j}\|u_n\|^2-c_1 \gamma_p \|u_n\|^p,
\end{equation}
and since all norms are equivalent in $H_j$, the right hand side of the last inequality would tend to $-\infty$, which is absurd again.
\end{proof}

In order to prove Theorem \ref{Theorem 2}, first we need some preliminary results. 

\begin{thm}\label{Teorema 2}
Suppose there exist integers $i, j \ge 2$ such that $\lambda_{i-1}<\lambda_{i}=\cdots=\lambda_{j}<\lambda_{j+1}$. Then, there exist $\delta_1>0$, $\epsilon',\epsilon''>0$ such that for every $\lambda\in (\lambda_j -\delta_1, \lambda_j)$ problem \eqref{P} has at least two non trivial solutions $u_1, u_2$ with $f_\lambda(u_i)\in [\epsilon',\epsilon'']$, $i=1,2$.
\end{thm}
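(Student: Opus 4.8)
The plan is to apply the $\nabla$--theorem of Marino and Saccon (recalled in the Appendix) to the functional $f_\lambda$ on the Hilbert space $\HL$, with respect to the splitting into $H_{i-1}\oplus H_j^\bot$ and its complement $\mathrm{span}(e_i,\dots,e_j)$. The geometric ingredients are already essentially in place: Lemma~\ref{Lemma 1} provides, for $\lambda\in(\lambda_{i-1},\lambda_j)$, radii $R>\rho>0$ with $\sup f_\lambda\big(T_{i-1,j}(R)\big)<\inf f_\lambda\big(S_{i-1}^+(\rho)\big)$, which is precisely the linking-type separation needed between the boundary of the disk $\overline{B_j(R)}\subset H_j$ and the sphere $S_{i-1}^+(\rho)$ in $H_{i-1}^\bot$. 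Proposition~\ref{prop 1} supplies the compactness-type hypothesis $(\nabla)(f_\lambda,H_{i-1}\oplus H_j^\bot,\epsilon',\epsilon'')$ for all $\lambda\in[\lambda_{i-1}+\delta,\lambda_{j+1}-\delta]$ and all $0<\epsilon'<\epsilon''<\epsilon_0$, and the $(PS)_c$ condition holds at every level. So the only missing quantitative point is that the top of the linking set lies below the threshold $\epsilon_0$ coming from Lemma~\ref{Lemma 2}.

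First I would fix $\delta>0$ small enough that $\lambda_j-\delta>\lambda_{i-1}$, and let $\epsilon_0>0$ be the constant produced by Lemma~\ref{Lemma 2} (equivalently Proposition~\ref{prop 1}) on the interval $[\lambda_{i-1}+\delta,\lambda_{j+1}-\delta]$. Then I would invoke Lemma~\ref{Lemma 4}: since $\sup f_\lambda(H_j)\to 0$ as $\lambda\to\lambda_j$, there is $\delta_1\in(0,\delta)$ such that for all $\lambda\in(\lambda_j-\delta_1,\lambda_j)$ one has $\sup f_\lambda(\overline{B_j(R)})\le \sup f_\lambda(H_j)<\epsilon_0/2$. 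Shrinking $\delta_1$ further if needed, I would also keep $\lambda$ in the range where Lemma~\ref{Lemma 1} applies, so that $0<\inf f_\lambda(S_{i-1}^+(\rho))$; choose $\epsilon''\in(0,\epsilon_0)$ with $\sup f_\lambda(\overline{B_j(R)})<\epsilon''$, and then pick $\epsilon'\in\big(\sup f_\lambda(T_{i-1,j}(R)),\min\{\epsilon'',\inf f_\lambda(S_{i-1}^+(\rho))\}\big)$, which is nonempty because $\sup f_\lambda(T_{i-1,j}(R))\le 0<\inf f_\lambda(S_{i-1}^+(\rho))$. With these choices the hypotheses of the $\nabla$--theorem are met: the sublevel geometry (linking of $\partial(\overline{B_j(R)})=T_{i-1,j}(R)$-part with $S_{i-1}^+(\rho)$), the $(\nabla)$-condition between $\epsilon'$ and $\epsilon''$, and $(PS)$. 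The theorem then yields two critical points $u_1,u_2$ of $f_\lambda$ with $f_\lambda(u_1),f_\lambda(u_2)\in[\epsilon',\epsilon'']$.

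Finally, since $\epsilon'>0$ and $f_\lambda(0)=0$ (Remark~\ref{Osservazione 1}), both $u_1$ and $u_2$ are nontrivial, and being critical points of $f_\lambda$ they give, via the trace, two nontrivial solutions of \eqref{P}. This proves the statement with the $\delta_1$, $\epsilon'$, $\epsilon''$ just constructed.

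I expect the main obstacle to be purely bookkeeping rather than conceptual: one must verify carefully that a single choice of $\delta_1$ simultaneously guarantees (a) $\lambda$ stays inside $(\lambda_{i-1}+\delta,\lambda_{j+1}-\delta)$ so Proposition~\ref{prop 1} applies, (b) $\lambda\in(\lambda_{i-1},\lambda_j)$ so Lemma~\ref{Lemma 1} gives the strict linking inequality with the \emph{same} $R,\rho$, and (c) $\sup f_\lambda(\overline{B_j(R)})<\epsilon_0$ via Lemma~\ref{Lemma 4} — and that the intermediate levels $\epsilon',\epsilon''$ can be chosen uniformly once $\lambda$ is fixed. A subtle point worth stating explicitly is that the value $R$ from Lemma~\ref{Lemma 1} may depend on $\lambda$; one should either note that it can be chosen uniformly on a compact $\lambda$-subinterval (the estimates there are uniform in $\lambda$ bounded away from $\lambda_j$), or simply fix $\lambda$ first and then run the whole selection of constants, which is all that the statement requires. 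Matching the abstract hypotheses of the Marino--Saccon theorem (orientation of the linking, which subspace plays the role of the ``finite-dimensional'' one) to the concrete sets $T_{i-1,j}(R)$, $\overline{B_j(R)}$ and $S_{i-1}^+(\rho)$ is the one place where care is genuinely needed.
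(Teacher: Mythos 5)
Your proposal is correct and follows essentially the same route as the paper: Lemma~\ref{Lemma 1} for the linking geometry, Proposition~\ref{prop 1} for the $(\nabla)$-condition, Lemma~\ref{Lemma 4} to push $\sup f_\lambda(\overline{B_j(R)})$ below the threshold, and then the Marino--Saccon $\nabla$-theorem. Your ordering of the choice of $\epsilon'$ (after locating $\inf f_\lambda(S_{i-1}^+(\rho))$) is in fact slightly more careful than the paper's, which fixes $\epsilon'<\epsilon''<\epsilon_0$ at the outset.
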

\begin{proof}
Take $\delta'>0$ and find $\epsilon_0$ as in Proposition \ref{prop 1}. Fix $\epsilon'<\epsilon''<\epsilon_0$. Then, by Lemma \ref{Lemma 4}, there exists $\delta_1\leq \delta'$ such that, if $\lambda \in (\lambda_j -\delta_1, \lambda_j)$, we have $\sup f_\lambda (H_j)<\epsilon ''$ and by Proposition \ref{prop 1} the condition $(\nabla)-(f_\lambda, H_{i-1}\oplus H_j^\bot, \epsilon',\epsilon'')$ holds. Moreover, since $\lambda<\lambda_j$, the topological structure of Lemma  \ref{Lemma 1} is satisfied. 

By the $\nabla$-Theorem (see Theorem \ref{Teorema 3} in the Appendix), there exist two critical points $u_1, u_2$ of $f_\lambda$ such that $f_\lambda (u_i)\in [\epsilon', \epsilon''], i=1,2$. In particular $u_1$ and $u_2$ are nontrivial solutions of  \eqref{P}, since $f_\lambda(0)=0$.
\end{proof}

In order to prove the existence of a third nontrivial solution, let us prove the following Lemma.
\begin{lemma}\label{Lemma 5}
Suppose there exist integers $i, j \ge 2$ such that $\lambda_{i-1}<\lambda_{i}=\cdots=\lambda_{j}<\lambda_{j+1}$. Then, there exists $\delta_i>0$, $\rho_1>0$ and $R_1>\rho_1$ such that for all $\lambda\in (\lambda_i -\delta_i, \lambda_i)$, we have
$$\inf f_\lambda (S_j^{+} (\rho_1))>\sup f_\lambda (T_{i, j+1} (R_1)).$$
In particular there exists a critical point $u_3$ of $f_\lambda$ such that $$f_\lambda (u_3)\ge \inf f_\lambda (S_j^{+} (\rho_1)).$$
\end{lemma}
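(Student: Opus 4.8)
The plan is to prove the linking inequality by the usual chain of estimates on the relevant sets, and then to read off $u_3$ from the linking theorem recalled in the Appendix together with the Palais--Smale condition.

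First I would bound $f_\lambda$ from below on $S_j^+(\rho_1)$. For $u\in H_j^\bot$ the constrained Poincar\'e inequality \eqref{2} gives $\int_\Omega u^2\,dx\le\lambda_{j+1}^{-1}\|u\|^2$, and combining this with \eqref{G} and the continuous embeddings \eqref{cont} yields $f_\lambda(u)\ge\big(\tfrac12-\tfrac{\lambda}{2\lambda_{j+1}}-c\epsilon\big)\|u\|^2-C_\epsilon\|u\|^p$. Since $\lambda<\lambda_i=\lambda_j<\lambda_{j+1}$ one has $\tfrac12-\tfrac{\lambda}{2\lambda_{j+1}}\ge\tfrac12-\tfrac{\lambda_j}{2\lambda_{j+1}}>0$, a bound independent of $\lambda$; choosing $\epsilon$ and then $\rho_1$ small enough I obtain $\inf f_\lambda(S_j^+(\rho_1))\ge\alpha>0$ with $\alpha$ independent of $\lambda<\lambda_i$.

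Next I would bound $f_\lambda$ from above on $T_{i,j+1}(R_1)$, treating its two pieces separately. On $\{u\in H_i:\|u\|\le R_1\}$, since $H_i\subset H_j$ one has $f_\lambda(u)\le\sup f_\lambda(H_i)\le\sup f_\lambda(H_j)$, and by Lemma~\ref{Lemma 4} (recall $\lambda_i=\lambda_j$) this last quantity tends to $0$ as $\lambda\to\lambda_i^-$; hence there is $\delta_i>0$, which we may take $<\lambda_i$, with $\sup f_\lambda(H_i)<\alpha$ for all $\lambda\in(\lambda_i-\delta_i,\lambda_i)$. On $\{u\in H_{j+1}:\|u\|=R_1\}$, property $(g_5)$ together with $\lambda>0$ gives $f_\lambda(u)\le\tfrac12\|u\|^2-c_1\|u\|_{L^p(\Omega)}^p$, which, by equivalence of norms on the finite--dimensional space $H_{j+1}$ and since $p>2$, tends to $-\infty$ as $\|u\|\to\infty$ uniformly in $\lambda$; so $R_1>\rho_1$ can be fixed (independently of $\lambda$) so large that $f_\lambda<0$ on this piece. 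Putting the two pieces together yields $\sup f_\lambda(T_{i,j+1}(R_1))<\alpha\le\inf f_\lambda(S_j^+(\rho_1))$, the claimed inequality; note the order in which the parameters are chosen: $\rho_1$ (hence $\alpha$) first, then $\delta_i$, then $R_1$.

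For the last assertion, the inequality just proved means that $S_j^+(\rho_1)$ and $T_{i,j+1}(R_1)$ form a linking pair in the generalized sense recalled in the Appendix, with $H_j^\bot\cap H_{j+1}=\mathrm{span}(e_{j+1})$ playing the role of the transversal direction; since $f_\lambda$ satisfies $(PS)_c$ for every $c\in\R$ (Section~\ref{Disuguaglianze e lemmi tecnici}), the Linking Theorem produces a critical value $c\ge\inf f_\lambda(S_j^+(\rho_1))>0$ of $f_\lambda$, and any associated critical point is the desired $u_3$. The estimates above are routine; the only delicate step is topological, namely checking that $T_{i,j+1}(R_1)$ and $S_j^+(\rho_1)$ genuinely link in the abstract framework of the Appendix, since $T_{i,j+1}(R_1)$ involves the whole resonant block $\mathrm{span}(e_i,\dots,e_j)$ together with the additional direction $e_{j+1}$ rather than a single transversal line. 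This is, however, precisely the linking geometry underlying the $\nabla$--theorem of Marino and Saccon, so it is covered once the splitting $H_{0,L}^1(\mathcal C)=H_{i-1}\oplus\mathrm{span}(e_i,\dots,e_j)\oplus H_j^\bot$ is matched with the abstract hypotheses.
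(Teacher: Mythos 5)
Your argument is correct and follows essentially the same route as the paper: a $\lambda$-uniform lower bound on $S_j^+(\rho_1)$ via \eqref{2} and \eqref{G}, the bound $\sup f_\lambda(H_j)\to 0$ from Lemma~\ref{Lemma 4} to handle the flat part of $T_{i,j+1}(R_1)$, the $(g_5)$-plus-equivalence-of-norms estimate on the sphere in $H_{j+1}$, and then the classical Linking Theorem with the splitting $H_j\oplus H_j^\bot$ to produce $u_3$. The paper's proof additionally fixes $\delta_i<\delta_1$ (the constant from Theorem~\ref{Teorema 2}) so that the three solutions coexist later, but that is not needed for the lemma itself.
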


\begin{proof}
Take $\lambda\in [\lambda_{i-1}, \lambda_i)$. By \eqref{2} and $(g_5)$ we get that for all $\tau>0$ there exist $\rho_1>0$ such that, if $v\in H_j^\bot$ and $\|v\|=\rho_1$, then 
\begin{equation}\label{11}
f_\lambda (v)\ge \dfrac{1}{2} \left(1-\dfrac{\lambda}{\lambda_{j+1}}-\tau \right)\rho_1^2.
\end{equation}
Choosing $\tau$ small enough, we get that $C=1-\dfrac{\lambda_j}{\lambda_{j+1}}-\tau>0$, so that \eqref{11} reads
\[
f_\lambda (v)\ge C\rho_1^2.
\]
By $(g_5)$ and  \eqref{1}, as in \eqref{infhi}, we get that $f_\lambda (u)\rightarrow -\infty$ if $u\in H_{j+1}$ and $\|u\|\rightarrow \infty$, since all the norms in $H_{j+1}$ are equivalent. By Lemma \ref{Lemma 4}, there exists $\delta_i>0$ such that $\forall \; \lambda$ in $(\lambda_i-\delta_i, \lambda_i)$, we have
$$
\sup f_\lambda (H_j)<C\rho_1^2.
$$
Finally, we choose $\delta_i<\delta_1$, where $\delta_1$ is the one found in Theorem \ref{Teorema 2}.

In this way, the classical Linking Theorem (see the Appendix) shows the existence of a critical point $u_3$ of $f_\lambda$ such that $f_\lambda(u_3)\ge C\rho_1^2$.
\end{proof}

Note that, although the topological structure found in Lemma \ref{Lemma 5} is equal to the one of Lemma \ref{Lemma 1}, it is not possible to apply the $\nabla$-theorem again, since it is not clear if $(\nabla)-\Big(f_\lambda,H_j \oplus H_{j+1}^\bot, C\rho_1^2, \sup f_\lambda (B_{j+1} (R_1))\Big)$ holds.

Now, we are ready to prove Theorem \ref{Theorem 2}, which becomes an obvious corollary of all the previous results.
\begin{proof}[Proof of Theorem $\ref{Theorem 2}$]
Take $\delta_i$ as given in Lemma \ref{Lemma 5}. Then the critical point $u_3$ found there is different from the critical points $u_1$, $u_2$ found in Theorem \ref{Teorema 2}, since $$f_\lambda (u_i)\le \sup f_\lambda (H_j) <C\rho_1^2 \le f_\lambda (u_3),$$
$i=1,2$.
\end{proof}
 
 \section{Appendix}
Although it is a well-known result in critical point theory, we recall here the linking theorem (see \cite[Theorem 5.3]{R} or \cite[Theorem 1.1]{PR}), in order to state an a priori estimate on critical values which we will use to prove Theorem \ref{Theorem 2}.
 \begin{thm}[Linking Theorem]
Let X be a Banach space such that $X=X_1 \oplus X_2$ with dim $X_1<\infty$, and let $f:X\to \R$ be of class $C^1$. Assume that
\begin{description}
\item $i)$ $f(0)=0$;
\item $ii)$ there exist $\rho, \alpha >0$ such that $\inf f(S_\rho \cap X_2)\ge \alpha $ and there exist $R>\rho$, $e\in X_2$ with $\|e\|=1$ such that $\sup f(\Sigma_R)\le 0$, where $S_\rho$ denotes the sphere in $X$ of radius $\rho$, with
$$\Sigma_R=\partial_{X_1 \oplus span(e)} \Delta_R \quad \mbox{and}$$
$$\Delta_R=\{u+te : u\in X_1, t>0, \|u+te\|\le R \}.$$
\item $iii)$ $(PS)_\beta$ holds, where 
\[
\beta=\inf_{h\in\textit{H}}\sup_{u\in \Delta_R} f(h(u))
\]
and
\[
\textit{H}=\{h\in C(\Delta_R, X): h_{|\Sigma_R }=Id\}.
\]
\end{description}
Then, $\beta$ is a critical value for $f$, and
\[
\alpha\leq \beta \leq  \sup_{u\in \Delta_R} f(u).
\]
\end{thm}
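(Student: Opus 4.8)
The plan is to prove this through the classical minimax scheme: the value $\beta$ is shown to lie between $\alpha$ and $\sup_{\Delta_R}f$ by a topological linking (intersection) argument, and its criticality is then extracted from the $(PS)_\beta$ condition by a deformation argument. I would denote by $P_1:X\to X_1$ and $P_2:X\to X_2$ the continuous projections associated with the splitting $X=X_1\oplus X_2$, and write $Y:=X_1\oplus\mathrm{span}(e)$, which is finite dimensional and contains $\Delta_R$.

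The upper bound is immediate: since $\mathrm{Id}\in H$, we have $\beta\le\sup_{u\in\Delta_R}f(\mathrm{Id}(u))=\sup_{u\in\Delta_R}f(u)$. The heart of the lower bound is the \emph{intersection lemma}: for every $h\in H$ one has $h(\Delta_R)\cap(S_\rho\cap X_2)\neq\emptyset$. To prove it I would introduce the map $G:\Delta_R\to X_1\times\R$, $G(w)=\bigl(P_1h(w),\,\|P_2h(w)\|-\rho\bigr)$, whose zeros $w_0$ satisfy $P_1h(w_0)=0$ (so $h(w_0)\in X_2$) and $\|h(w_0)\|=\|P_2h(w_0)\|=\rho$, i.e. $h(w_0)\in S_\rho\cap X_2$. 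Since $\dim(X_1\times\R)=\dim Y$, the Brouwer degree $\deg(G,\mathrm{int}\,\Delta_R,0)$ is defined once $G\neq0$ on $\Sigma_R=\partial\Delta_R$. On $\Sigma_R$ one has $h=\mathrm{Id}$, so $G$ reduces to $G_0(u+te)=(u,\,t-\rho)$, which vanishes only at $u=0$, $t=\rho$, i.e. at $\rho e$; but $\rho e$ lies in the interior of $\Delta_R$ because $0<\rho<R$, whence $G\neq0$ on $\Sigma_R$. By the dependence of the degree only on boundary values, $\deg(G,\mathrm{int}\,\Delta_R,0)=\deg(G_0,\mathrm{int}\,\Delta_R,0)=\pm1\neq0$, so $G$ has a zero and the intersection lemma follows. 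Consequently $\sup_{u\in\Delta_R}f(h(u))\ge\inf f(S_\rho\cap X_2)\ge\alpha$ for every $h\in H$, hence $\beta\ge\alpha>0$.

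For the criticality of $\beta$ I would argue by contradiction, assuming $\beta$ is a regular value. Since $f\in C^1$ satisfies $(PS)_\beta$, the quantitative deformation lemma provides, for a suitable small $\epsilon>0$, a continuous map $\eta:X\to X$ sending the sublevel set $\{f\le\beta+\epsilon\}$ into $\{f\le\beta-\epsilon\}$ and equal to $\mathrm{Id}$ outside $f^{-1}([\beta-2\epsilon,\beta+2\epsilon])$. The decisive choice is to take $\epsilon$ small enough that $\beta-2\epsilon>0\ge\sup f(\Sigma_R)$, which is possible precisely because $\sup f(\Sigma_R)\le0<\alpha\le\beta$; then $\Sigma_R\subset\{f\le0\}$ lies outside the strip $f^{-1}([\beta-2\epsilon,\beta+2\epsilon])$, so $\eta$ fixes $\Sigma_R$ pointwise. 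Picking $h\in H$ with $\sup_{\Delta_R}f\circ h<\beta+\epsilon$ and setting $\tilde h:=\eta\circ h$, one checks $\tilde h\in H$ (it is continuous and equals the identity on $\Sigma_R$, where $h=\mathrm{Id}$ and $\eta=\mathrm{Id}$), while $\sup_{\Delta_R}f\circ\tilde h\le\beta-\epsilon$, contradicting the definition of $\beta$ as the infimum over $H$. Hence $\beta$ is a critical value, and together with the two bounds this yields $\alpha\le\beta\le\sup_{\Delta_R}f$.

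I expect the main obstacle to be the intersection lemma: making the degree computation rigorous requires verifying that $G$ is well defined and continuous (using that $X_1$ is finite dimensional and $P_1,P_2$ are bounded), that its zero set avoids $\Sigma_R$, and that the degree is invariant under the homotopy collapsing $h$ to the identity on the boundary. The second delicate point is the compatibility of the deformation with the constraint defining $H$: one must guarantee that $\eta$ leaves $\Sigma_R$ fixed, which is exactly why the separation $\sup f(\Sigma_R)\le0<\beta$ built into hypothesis $ii)$ is indispensable.
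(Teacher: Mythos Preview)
Your argument is correct and is precisely the classical minimax proof of the Linking Theorem: the intersection lemma via Brouwer degree in the finite-dimensional space $Y=X_1\oplus\mathrm{span}(e)$, followed by the standard deformation argument exploiting $(PS)_\beta$ and the strict separation $\sup f(\Sigma_R)\le 0<\alpha\le\beta$. The only cosmetic point is that $\Delta_R$ as written in the statement has $t>0$ strictly, so one should tacitly work with its closure (which includes the base disk $\{u\in X_1:\|u\|\le R\}$) when invoking degree theory; your boundary analysis already handles this correctly.

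However, note that the paper does \emph{not} prove this theorem at all: it is recalled in the Appendix as a well-known result, with explicit citations to \cite[Theorem~5.3]{R} and \cite[Theorem~1.1]{PR}, and is used only as a black box in the proofs of Theorem~\ref{Theorem 1} (second case) and Lemma~\ref{Lemma 5}. So there is no ``paper's own proof'' to compare with; your proposal simply supplies the standard argument that the cited references contain.
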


\begin{defn}[see \cite{ms}]
Let $X$ be a Hilbert space, $f:X\longrightarrow \mathbb R$ a $C^1$ function, $M$ a closed subspace of $X$ and $a,b\in \mathbb R \cup \{-\infty, +\infty \}$.

We say that the condition $(\nabla)- (f,M,a,b)$ holds if there exists $\gamma >0$ such that 
$$
\inf \; \Big\{ \|P_M \nabla f(u)\|: a\le f(u)\le b, d(u,M)\le \gamma \Big\}>0,
$$
where $P_M:X\longrightarrow M$ is the orthogonal projection of $X$ on $M$ and $d(u,M)=\displaystyle \inf_{z\in M}{d(u,z)}$ denotes the distance of $u$ from the subspace $M$.
\end{defn}
This means that, if the condition above holds, then $f_{|_M}$ cannot have critical points $u$ with $a\le f(u)\le b$ with some uniformity.
\begin{thm}[\textbf{$\nabla$-Theorem}, see \cite{ms}]\label{Teorema 3}
Let $X$ be a Hilbert space and let $X_i, i=1,2,3$ be three subspaces of $X$ such that $X=X_1\oplus X_2\oplus X_3$ with dim $X_i<\infty$ for $i=1,2$. Denote with $P_i:X\longrightarrow X_i$ the orthogonal projection of $X$ on $X_i$. Let $f: X\longrightarrow \mathbb R$ be a function of class $C^{1}$.
Let $\rho, \rho',\rho'',\rho_1$ be such that $\rho_1>0$, $0\le \rho'< \rho<\rho''$ and define
$$
\Delta=\Big\{u\in X_1 \oplus X_2 : \rho'\le \|P_2 u\|\le \rho'', \|P_1 u\|\le \rho_1 \Big\},
$$
$$
T=\partial_{X_1 \oplus X_2} \Delta, \quad S_{23}(\rho)=\Big\{u\in X_2\oplus X_3 : \|u\|=\rho\Big\}
$$ and
$$
B_{23} (\rho)=\Big\{u\in X_2 \oplus X_3 : \|u\|\le \rho \Big\}.
$$
Suppose that
$$
a':= \sup f(T)<\inf f(S_{23} (\rho))=:a''.
$$
Let $a$ and $b$ be such that $a'<a<a''$ and $b> \sup f(\Delta)$. Suppose that the condition $(\nabla)-(f,X_1\oplus X_3,a,b)$ holds and that $(PS)_c$ holds for all $c\in [a,b]$. Then, $f$ has at least two critical points in $f^{-1}([a,b])$. Moreover, if 
$$\inf f(B_{23} (\rho))>-\infty$$
and $(PS)_c$ holds for all $c \in [a_1,b]$ with 
\[
 a_1<\inf f(B_{23} (\rho)),
 \]
then $f$ has another critical level in $[a_1, a']$.
\end{thm}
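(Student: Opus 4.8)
Since the $\nabla$-theorem above is the abstract critical point result of Marino and Saccon \cite{ms}, I sketch only the strategy one would follow. The engine is the deformation lemma: on any compact subinterval of $[a_1,b]$ on which $(PS)_c$ holds, a locally Lipschitz, $f$-decreasing pseudo-gradient vector field for $f$ produces, for each regular level $c$ and each small $\varepsilon>0$, a deformation $\varphi$ of $X$ with $\varphi(f^{c+\varepsilon})\subset f^{c-\varepsilon}$ which is the identity on a prescribed sublevel set far below. The proof then reduces to exhibiting suitable min--max classes and checking that their values land in the required intervals; throughout, write $M:=X_1\oplus X_3$ for the subspace in the $(\nabla)$-condition, and note that $(\nabla)-(f,M,a,b)$ says precisely that, in the tube $\{a\le f\le b,\ d(u,M)\le\gamma\}$, $f$ has no critical points and, more, $\|P_M\nabla f\|$ is bounded below there.

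I would first extract the higher critical value. Set $\mathcal H=\{h\in C(\Delta,X):h|_T=\mathrm{id}\}$ and $c_1=\inf_{h\in\mathcal H}\sup_{u\in\Delta}f(h(u))$. The hypothesis $a'=\sup f(T)<\inf f(S_{23}(\rho))=a''$ is combined with the topological fact that the relative boundary $T=\partial_{X_1\oplus X_2}\Delta$ links $S_{23}(\rho)$: since $\Delta$ is a compact $(\dim X_1+\dim X_2)$-dimensional manifold with boundary sitting in $X_1\oplus X_2$, while $S_{23}(\rho)\subset X_2\oplus X_3$, a Brouwer-degree argument of Benci--Rabinowitz type gives $h(\Delta)\cap S_{23}(\rho)\neq\emptyset$ for every $h\in\mathcal H$. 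Hence $\sup f\circ h\ge a''$, so $c_1\ge a''>a$, while $h=\mathrm{id}$ gives $c_1\le\sup f(\Delta)<b$. If $c_1$ were regular, applying the deformation lemma at $c_1$ (legitimate since $c_1\in[a,b]$ and $(PS)_{c_1}$ holds) to a nearly optimal $h\in\mathcal H$ would push $\sup f\circ h$ below $c_1$, a contradiction; hence $c_1$ is a critical value in $[a'',\sup f(\Delta)]\subset(a,b)$.

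The delicate point — and the one I expect to be the main obstacle — is a second critical value $c_2\neq c_1$ inside $[a,b]$, and this is where $(\nabla)-(f,M,a,b)$ is indispensable. A plain pseudo-gradient flow of $f$ need not decrease $f$ near $M$, so a min--max level could get trapped at a value that is ``critical only along $M$''; to detect a genuinely new level one must push competing sets downward even inside the tube around $M$. The $(\nabla)$-condition provides exactly that: on $\{a\le f\le b,\ d(u,M)\le\gamma\}$ one has $\|P_M\nabla f(u)\|\ge\sigma>0$, so the field $-P_M\nabla f$, regularized to be locally Lipschitz and glued through a partition of unity to a genuine pseudo-gradient field away from the tube, still strictly lowers $f$ inside the tube while moving points parallel to $M$ (hence without increasing their distance to $M$). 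Running the flow of this glued field gives a deformation that collapses sublevels throughout $[a,b]$; inserting it — together with the $(PS)$-deformation that would collapse the level $c_1$ — into a second, homotopically independent min--max scheme over a family of linking-type subsets distinct from those realizing $c_1$, and assuming for contradiction that $c_1$ is the only critical value in $[a,b]$, forces a contradiction (in effect, a relative Lusternik--Schnirelmann count of at least two). Pinning down the correct second family, proving the compatibility of the two deformations, and checking $c_2\neq c_1$ are the technical heart of the theorem.

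For the last clause, assume in addition $\inf f(B_{23}(\rho))>-\infty$ and $(PS)_c$ on $[a_1,b]$ with $a_1<\inf f(B_{23}(\rho))$. I would realize the remaining critical value as a ``max--min'': put $\mathcal K=\{h\in C(\overline{B_{23}(\rho)},X):h|_{S_{23}(\rho)}=\mathrm{id}\}$ and $c_3=\sup_{h\in\mathcal K}\inf_{u\in\overline{B_{23}(\rho)}}f(h(u))$. The dual linking of the pair $(\overline{B_{23}(\rho)},S_{23}(\rho))$ with $T$ — the same degree argument as above, read the other way — gives $h(\overline{B_{23}(\rho)})\cap T\neq\emptyset$ for every $h\in\mathcal K$, so $\inf f\circ h\le\sup f(T)=a'$ and thus $c_3\le a'$; taking $h=\mathrm{id}$ gives $c_3\ge\inf f(B_{23}(\rho))>a_1$. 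Since $(PS)_c$ holds on $[a_1,b]\supset[a_1,a']$, the standard max--min argument with the deformation lemma makes $c_3$ a critical value in $(a_1,a']$, automatically distinct from $c_1$ and $c_2$, which lie in $[a,b]$ with $a>a'\ge c_3$. This yields the announced third critical level and completes the scheme.
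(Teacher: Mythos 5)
First, a point of order: the paper does not prove this statement at all. It is the Marino--Saccon $\nabla$-theorem, imported from \cite{ms} and stated in the Appendix only so that it can be invoked in Section 3 (in the proof of Theorem 3.7). There is therefore no internal proof to compare your sketch against, and it has to be judged on its own terms. On those terms it has a genuine gap exactly where the theorem is nontrivial. Your first step, the min--max value $c_1=\inf_{h}\sup_{\Delta}f\circ h\in[a'',\sup f(\Delta)]$ obtained from the Brouwer-degree linking of the finite-dimensional annulus $\Delta$ with $S_{23}(\rho)$ plus the deformation lemma, is correct but is just the classical linking theorem and yields \emph{one} critical point. The entire added content of the $\nabla$-theorem is the \emph{second} critical point in $f^{-1}([a,b])$, and for that you only describe the role of the $(\nabla)$-condition heuristically and then state that identifying the second min--max family, gluing the two deformations, and verifying $c_2\neq c_1$ ``are the technical heart of the theorem''. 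That is an announcement of the theorem, not a proof: nothing in your text exhibits a second, homotopically independent class, nor rules out that the modified flow merely reproduces the level $c_1$. The annular geometry $\rho'<\rho<\rho''$ is there precisely to produce two separate linking configurations (one for each boundary component of $\Delta$ in the $P_2$-direction), with the $(\nabla)$-condition preventing the two resulting levels from collapsing onto a critical point of $f$ restricted to $X_1\oplus X_3$; none of this is carried out.

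There is a second, more localized problem in your treatment of the last clause. You claim that $h(\overline{B_{23}(\rho)})\cap T\neq\emptyset$ for every continuous $h$ fixing $S_{23}(\rho)$, ``by the same degree argument read the other way''. But $\overline{B_{23}(\rho)}$ is a ball in $X_2\oplus X_3$, which is infinite-dimensional in every intended application (and in the abstract statement $X_3$ carries no finiteness assumption), so Brouwer degree is unavailable and the intersection property is false for general continuous maps of an infinite-dimensional ball fixing its boundary sphere. To make a max--min of this kind work one needs either a Leray--Schauder structure on the admissible deformations (e.g.\ compact perturbations of the identity, which is how such dual linkings are usually salvaged) or a different construction of the level in $[a_1,a']$; as written, the inequality $c_3\le a'$ is unjustified.
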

Note that, in our context, the critical level in $[a_1,a']$ could vanish, so that no other nontrivial solution is obtained.

 \section*{Acknowledgement} The authors would like the thank the anonymous referee for her/his careful reading of the manuscript and her/his comments, which improved the final presentation.

\noindent Dimitri Mugnai\footnote{D. M. is member of the Gruppo Nazionale per
l'Analisi Matematica, la Probabilit\`a e le loro Applicazioni (GNAMPA)
of the Istituto Nazionale di Alta Matematica (INdAM), and is supported by the GNAMPA project {\sl Systems with irregular operators}}: Dipartimento di Matematica e Informatica, University of Perugia, Via Vanvitelli 1, 06123 Perugia - Italy\\
e-mail: dimitri.mugnai@unipg.it \medskip

\noindent
Dayana Pagliardini: Scuola Normale Superiore, Piazza dei Cavalieri 7, 56126 Pisa- Italy\\ e-mail:
dayana.pagliardini@sns.it\\

\end{document}